\newtheorem{teo}{Theorem}
\newtheorem{lem}{Lemma}
\newtheorem{remark}{Remark}
 \title{{\bf     Abstract    evolution equations with an operator function in the second term   }}
\author{Maksim \,V.~Kukushkin   \\ \\
 \small  \textit{Moscow State University of Civil Engineering, 129337,  Moscow, Russia}\\
 \small\textit{Kabardino-Balkarian Scientific Center, RAS, 360051,  Nalchik, Russia}\\
\textit{\small\textit{kukushkinmv@rambler.ru}} }
\date{}
\begin{document}

\maketitle

\begin{abstract}
In this paper, having introduced a convergence of a series on the root vectors  in the  Abel-Lidskii sense, we present a valuable application to the evolution equations.
The main issue of the paper is an approach  allowing us to principally broaden conditions imposed upon the right-hand side of the evolution equation in the abstract Hilbert space.
 In this way, we come to the definition of the function of an unbounded non-selfadjoint operator. Meanwhile, considering the main issue we involve an additional concept that  is a generalization of the spectral theorem for a non-selfadjoint operator.

\end{abstract}
\begin{small}\textbf{Keywords:}
 Spectral theorem;  Abel-Lidskii basis property;   Schatten-von Neumann  class; operator function; evolution equation.   \\\\
{\textbf{MSC} 47B28; 47A10; 47B12; 47B10;  34K30; 58D25.}
\end{small}

\section{Introduction}

  The application  of results connected with the basis property in the Abell-Lidskii sense  \cite{firstab_lit:1kukushkin2021}  covers  many   problems \cite{firstab_lit:2kukushkin2022} in the framework of the theory of evolution  equations. The central idea of this paper is devoted to  an approach  allowing us to principally broaden conditions imposed upon the right-hand side of the evolution equation in the abstract Hilbert space. In this way we can obtain abstract results covering many applied problems to say nothing on the far-reaching   generalizations.
     We plan to implement the idea  having involved a notion of    an operator function. This is why one of the paper challenges is to find a harmonious way of reformulating the main principles of the spectral theorem having  taken  into account the peculiarities of the convergence in the Abel-Lidskii sense. However, our final goal is  an existence and uniqueness theorem for an abstract  evolution equation with an operator function at the right-hand side, where  the derivative at the left-hand side is supposed to be of the integer order. The  peculiar contribution   is the obtained formula for the solution of the evolution equation.   We should remind that involving a notion of the operator function, we broaden a great deal  a class corresponding to the right-hand side.   This gives us an opportunity to claim  that the main issue of the paper is closely connected  with  the spectral theorem for  the non-selfadjoint unbounded  operator. Here, we should make a brief digression and consider a theoretical background that allows us to obtain such exotic results.

    We should recall that the concept of the spectral theorem for a selfadgoint operator is based on the notion of a spectral family or the decomposition of the identical operator. Constructing a spectral family, we can define a selfadjoint  operator using a concept of the  Riemann integral, it is the very statement of the spectral theorem for a selfadjoint operator.   Using the same scheme, we come to a notion of the operator function of a selfadjoint operator.
         The idea can be clearly demonstrated if we consider  the  well-known representation  of  the compact selfadjoint operator as  a series on its  eigenvectors. The case corresponding to a non-selfadjoint operator is not so clear but we can adopt some notions and techniques to obtain similar  results. Firstly, we should note that    the question regarding decompositions of the operator on the series of eigenvectors  (root vectors)   is rather complicated and  deserves to be considered itself. For this purpose, we need to involve some generalized notions of the series convergence, we are compelled to understand it in one or another sense, we mean   Bari, Riesz, Abel (Abel-Lidskii) senses of the series convergence  \cite{firstab_lit:2Agranovich1994},\cite{firstab_lit:1Gohberg1965}.  The main disadvantage of the paper \cite{firstab_lit:2Agranovich1994} is a  sufficiently strong  condition imposed upon the numerical range of values comparatively with the sectorial condition (see definition of the sectorial operator), there considered a domain of the parabolic type containing the spectrum of the operator.  A reasonable question that appears is about minimal conditions that guaranty the desired result.    At the same time  the convergence in the Abel-Lidskii sense  was established in the paper \cite{firstab_lit:1Lidskii} for an operator class wider than the class of sectorial operators. A major contribution of the paper \cite{firstab_lit:1kukushkin2021} to the theory is a  sufficient condition  for the Abel-Lidskii basis property of the root functions system for a sectorial non-selfadjoint operator of the special type. Considering such an operator class we strengthen a little the condition regarding the semi-angle of the sector, but weaken a great deal conditions regarding the involved parameters. Moreover, the central aim generates  some prerequisites to consider technical peculiarities such as a newly constructed sequence of contours of the power type on the contrary to the Lidskii results \cite{firstab_lit:1Lidskii}, where a sequence of the contours of the  exponential type was considered. In any case, we may say that  a  clarification of the    results  \cite{firstab_lit:1Lidskii} devoted to the decomposition on the root vectors system of the non-selfadjoint operator has been obtained. In the paper \cite{firstab_lit:1kukushkin2021}, we used   a technique of the entire function theory and introduced  a so-called  Schatten-von Neumann class of the convergence  exponent.    Using a  sequence of contours of the power type,  we invented a peculiar  method how to calculate  a contour integral, involved in the problem in its general statement,  for strictly accretive operators satisfying special conditions formulated in terms of the norm.  Here, we  used  an equivalence  between operators with the discrete spectra and operators with the compact resolvent, for the results  devoted to  them  can be easily reformulated from one to another realm.

       In order to avoid the lack of information, consider a  class of non-selfadjoint operators for which the above concept can be successfully applied.  We ought to note
          that a peculiar scientific interest appears in the case when a senior term of the operator  is not selfadjoint
           \cite{firstab_lit:Shkalikov A.},\cite{firstab_lit(arXiv non-self)kukushkin2018},  for in the contrary case there is a plenty of results devoted to the topic wherein  the following papers are well-known
\cite{firstab_lit:1Katsnelson},\cite{firstab_lit:1Krein},\cite{firstab_lit:Markus Matsaev},\cite{firstab_lit:2Markus},\cite{firstab_lit:Shkalikov A.}. The fact is that most of them deal with a decomposition of the  operator  on a sum,  where the senior term
     must be either a selfadjoint ore normal operator. In other cases, the  methods of the papers
     \cite{kukushkin2019}, \cite{firstab_lit(arXiv non-self)kukushkin2018} become relevant  and allow us  to study spectral properties of  operators  whether we have the mentioned above  representation or not. Here, we should remark that
the methods  \cite{firstab_lit(arXiv non-self)kukushkin2018}   can be  used    in  the  natural way,  if we deal with abstract constructions formulated in terms of the semigroup theory   \cite{kukushkin2021a}.  The  central challenge  of the latter  paper  is how  to create a model   representing     a  composition of  fractional differential operators   in terms of the semigroup theory.   We should note that motivation arises  in connection with the fact that
  a second order differential operator can be represented  as a some kind of  a  transform of   the infinitesimal generator of a shift semigroup.
 Having been inspired by  the novelty of the  idea,  we generalize a   differential operator with a fractional integro-differential composition  in the final terms   to some transform of the corresponding  infinitesimal generator of the shift semigroup.
Having applied  the   methods
\cite{firstab_lit(arXiv non-self)kukushkin2018}, we   managed  to  study spectral properties of the  infinitesimal generator  transform and obtain   an outstanding result --
   asymptotic equivalence between   the
real component of the resolvent and the resolvent of the   real component of the operator. The relevance is based on the fact that
   the  asymptotic formula  for   the operator  real component  in most  cases  can be  established  due to well-known results  \cite{firstab_lit:Rosenblum}. Thus, the existence and  uniqueness theorems formulated in terms of the operator order \cite{firstab_lit:2kukushkin2022}, subsequently generalized  due to involving the notion of the operator function,  can cover a significantly large operator class.

Apparently, the application part of the paper appeals to the theory of differential equations.  In particular,     the existence and uniqueness theorems  for evolution  equations   with the right-hand side --  an operator function of a  differential operator  with a fractional derivative in  final terms are covered by the invented abstract method. In this regard such operators  as the Riemann-Liouville  fractional differential operator, the Kipriyanov operator,  the Riesz potential, the difference operator, the artificially constructed normal operator  are involved \cite{firstab_lit:2kukushkin2022}.

\section{Preliminaries}

Let    $ C,C_{i} ,\;i\in \mathbb{N}_{0}$ be   real constants. We   assume   that  a  value of $C$ is positive and   can be different within a formula   but   values of $C_{i} $ are  certain. Denote by $ \mathrm{int} \,M,\;\mathrm{Fr}\,M$ the interior and the set of boundary points of the set $M$ respectively.   Everywhere further, if the contrary is not stated, we consider   linear    densely defined operators acting on a separable complex  Hilbert space $\mathfrak{H}$. Denote by $ \mathcal{B} (\mathfrak{H})$    the set of linear bounded operators   on    $\mathfrak{H}.$      Denote by    $    \mathrm{D}   (L),\,   \mathrm{R}   (L),\,\mathrm{N}(L)$      the  {\it domain of definition}, the {\it range},  and the {\it kernel} or {\it null space}  of an  operator $L$ respectively.
  Let    $\mathrm{P}(L)$ be  the resolvent set of an operator $L.$
       Denote by $\lambda_{i}(L),\,i\in \mathbb{N} $ the eigenvalues of an operator $L.$
 Suppose $L$ is  a compact operator and  $N:=(L^{\ast}L)^{1/2},\,r(N):={\rm dim}\,  \mathrm{R}  (N);$ then   the eigenvalues of the operator $N$ are called   the {\it singular  numbers} ({\it s-numbers}) of the operator $L$ and are denoted by $s_{i}(L),\,i=1,\,2,...\,,r(N).$ If $r(N)<\infty,$ then we put by definition     $s_{i}=0,\,i=r(N)+1,2,...\,.$
 According  to the terminology of the monograph   \cite{firstab_lit:1Gohberg1965}  the  dimension  of the  root vectors subspace  corresponding  to a certain  eigenvalue $\lambda_{k}$  is called  the {\it algebraic multiplicity} of the eigenvalue $\lambda_{k}.$
  Denote by $n(r)$ a function equals to the number of the elements of the sequence $\{a_{n}\}_{1}^{\infty},\,|a_{n}|\uparrow\infty$ within the circle $|z|<r.$ Let $A$ be a compact operator, denote by $n_{A}(r)$   {\it counting function}   a function $n(r)$ corresponding to the sequence  $\{s^{-1}_{i}(A)\}_{1}^{\infty}.$
  Let  $\mathfrak{S}_{p}(\mathfrak{H}),\, 0< p<\infty $ be       a Schatten-von Neumann    class and      $\mathfrak{S}_{\infty}(\mathfrak{H})$ be the set of compact operators.
     In accordance with  the terminology of the monograph  \cite{firstab_lit:kato1980} the set $\Theta(L):=\{z\in \mathbb{C}: z=(Lf,f)_{\mathfrak{H}},\,f\in  \mathrm{D} (L),\,\|f\|_{\mathfrak{H}}=1\}$ is called the  {\it numerical range}  of an   operator $L.$
  An  operator $L$ is called    {\it sectorial}    if its  numerical range   belongs to a  closed
sector     $\mathfrak{ L}_{\iota}(\theta):=\{\zeta:\,|\arg(\zeta-\iota)|\leq\theta<\pi/2\} ,$ where      $\iota$ is the vertex   and  $ \theta$ is the semi-angle of the sector   $\mathfrak{ L}_{\iota}(\theta).$ If we want to stress the  correspondence  between $\iota$ and $\theta,$  then   we will write $\theta_{\iota}.$
 By the   {\it convergence exponent} $\rho$ of the sequence
$
\{a_{n}\}_{1}^{\infty}\subset \mathbb{C},\,a_{n}\neq 0,\,a_{n}\rightarrow \infty
$
 we mean the greatest lower bound for such numbers $\lambda$ that the  following   series   converges
$$
 \sum\limits_{n=1}^{\infty}\frac{1}{|a_{n}|^{\lambda}}<\infty.
$$
More detailed information can be found in \cite{firstab_lit:Eb. Levin}. Denote by $\tilde{\mathfrak{S}}_{\rho}(\mathfrak{H})$ the class of the operators such that
$$
 \tilde{\mathfrak{S}}_{\rho}(\mathfrak{H}):=\{T\in\mathfrak{S}_{\rho+\varepsilon},\,T\, \overline{\in} \,\mathfrak{S}_{\rho-\varepsilon},\,\forall\varepsilon>0 \},
$$
we will call it     {\it Schatten-von Neumann    class  of the convergence exponent.}
We use the following notations
$$
M_{f}(r):=\max\limits_{|z|=r}|f(z)|,\;m_{f}(r):=\min\limits_{|z|=r}|f(z)|,\;z\in \mathbb{C}.
$$
Everywhere further,   unless  otherwise  stated,  we   use  notations of the papers   \cite{firstab_lit:1Gohberg1965},  \cite{firstab_lit:kato1980},  \cite{firstab_lit:kipriyanov1960}, \cite{firstab_lit:1kipriyanov1960},
\cite{firstab_lit:samko1987}.

\vspace{0.5cm}

\noindent{\bf Convergence in the  Abel-Lidsky sense}\\

 In this subsection, we reformulate  results obtained by Lidskii \cite{firstab_lit:1Lidskii} in a more  convenient  form applicable to the reasonings of this paper.   However,  let us begin our narrative.    In accordance with the Hilbert theorem
  (see \cite{firstab_lit:Riesz1955}, \cite[p.32]{firstab_lit:1Gohberg1965})   the spectrum of an arbitrary  compact operator $B$  consists of the so-called normal eigenvalues, it gives us an opportunity to consider a decomposition to a direct sum of subspaces
\begin{equation}\label{1h}
 \mathfrak{H}=\mathfrak{N}_{q} \oplus  \mathfrak{M}_{q},
 \end{equation}
where both  summands are   invariant subspaces regarding the operator $B,$  the first one is  a finite dimensional root subspace corresponding to the eigenvalue $\mu_{q}$ and the second one is a subspace  wherein the operator  $B-\mu_{q} I$ is invertible.  Let $n_{q}$ is a dimension of $\mathfrak{N}_{q}$ and let $B_{q}$ is the operator induced in $\mathfrak{N}_{q}.$ We can choose a basis (Jordan basis) in $\mathfrak{N}_{q}$ that consists of Jordan chains of eigenvectors and root vectors  of the operator $B_{q}.$  Each chain $e_{q_{\xi}},e_{q_{\xi}+1},...,e_{q_{\xi}+k},\,k\in \mathbb{N}_{0},$ where $e_{q_{\xi}},\,\xi=1,2,...,m $ are the eigenvectors  corresponding   to the  eigenvalue $\mu_{q}$   and other terms are root vectors,   can be transformed by the operator $B$ in  accordance  with  the following formulas
\begin{equation}\label{2h}
Be_{q_{\xi}}=\mu_{q}e_{q_{\xi}},\;Be_{q_{\xi}+1}=\mu_{q}e_{q_{\xi}+1}+e_{q_{\xi}},...,Be_{q_{\xi}+k}=\mu_{q}e_{q_{\xi}+k}+e_{q_{\xi}+k-1}.
\end{equation}
Considering the sequence $\{\mu_{q}\}_{1}^{\infty}$ of the eigenvalues of the operator $B$ and choosing a  Jordan basis in each corresponding  space $\mathfrak{N}_{q},$ we can arrange a system of vectors $\{e_{i}\}_{1}^{\infty}$ which we will call a system of the root vectors or following  Lidskii  a system of the major vectors of the operator $B.$
Assume that  $e_{1},e_{2},...,e_{n_{q}}$ is  the Jordan basis in the subspace $\mathfrak{N}_{q}.$  We can prove easily (see \cite[p.14]{firstab_lit:1Lidskii}) that     there exists a  corresponding biorthogonal basis $g_{1},g_{2},...,g_{n_{q}}$ in the subspace $\mathfrak{M}_{q}^{\perp}.$

Using the reasonings \cite{firstab_lit:1kukushkin2021},  we conclude that $\{ g_{i}\}_{1}^{n_{q}}$ consists of the Jordan chains of the operator $B^{\ast}$ which correspond to the Jordan chains  \eqref{2h} due to the following formula
$$
B^{\ast}g_{q_{\xi}+k}= \bar{\mu}_{q} g_{q_{\xi}+k},\;B^{\ast}g_{q_{\xi}+k-1}= \bar{\mu}_{q} g_{q_{\xi}+k-1}+g_{q_{\xi}+k},...,
B^{\ast}g_{q_{\xi}}= \bar{\mu}_{q} g_{q_{\xi}}+g_{q_{\xi}+1}.
$$
   It is not hard to prove   that  the set  $\{g_{\nu}\}^{n_{j}}_{1},\,j\neq i$  is orthogonal to the set $ \{e_{\nu}\}_{1}^{n_{i}}$ (see \cite{firstab_lit:1kukushkin2021}).  Gathering the sets $\{g_{\nu}\}^{n_{j}}_{1},\,j=1,2,...,$ we can obviously create a biorthogonal system $\{g_{n}\}_{1}^{\infty}$ with respect to the system of the major vectors of the operator $B.$ It is rather reasonable to call it as  a system of the major vectors of the operator $B^{\ast}.$  Note that if an element $f\in\mathfrak{H}$ allows a decomposition in the strong sense
$$
f=\sum\limits_{n=1}^{\infty}e_{n}c_{n},\,c_{n}\in \mathbb{C},
$$
then by virtue of  the biorthogonal  system existing, we can claim that such a representation is unique. Further, let us come to the previously made  agrement that the vectors in each Jordan chain are arranged in the same order as in \eqref{2h} i.e.  at the first place there stands an eigenvector. It is clear that under such an assumption  we have
$$
c_{q_{\xi}+i}=\frac{(f,g_{q_{\xi}+k-i})}{(e_{q_{\xi}+i},g_{q_{\xi}+k-i})},\,0\leq i\leq k(q_{\xi}),
$$
where $k(q_{\xi})+1$ is a number of elements in the $q_{\xi}$-th Jourdan chain. In particular, if the vector $e_{q_{\xi}}$ is included to the major system solo, there does not exist a root vector corresponding to the same eigenvalue, then
$$
c_{q_{\xi}}=\frac{(f,g_{q_{\xi}})}{(e_{q_{\xi}},g_{q_{\xi}})}.
$$
Note that in accordance with the property of the biorthogonal sequences, we can expect that the denominators equal to one in the previous two relations.
Consider a formal series corresponding to a decomposition on the major vectors of the operator $B$
$$
f\sim\sum\limits_{n=1}^{\infty}e_{n}c_{n},
$$
where each number $n$ corresponds to a number $q_{\xi}+i$  (thus, the coefficients $c_{n}$ are defined in accordance with the above and  numerated in a simplest way). Consider a formal set of functions   with respect to a real parameter $t$
$$
H_{m}( \varphi,z,t ):=  \frac{e^{ \varphi(z)  t}}{m!} \cdot\frac{d^{m}}{dz^{\,m}}\left\{ e^{-\varphi (z)t}\right\} ,\;m=0,1,2,...\,,\,.
$$
Here we should note that if $\varphi:=z,\,z\in \mathbb{C},$ then we have a set of polynomials, what is in the origin of the concept, see  \cite{firstab_lit:1Lidskii}.  Consider a series
\begin{equation}\label{3h}
\sum\limits_{n=1}^{\infty}c_{n}(t)e_{n},
\end{equation}
where the coefficients $c_{n}(t)$ are defined  in accordance with the correspondence between the indexes  $n$ and $q_{\xi}+i$ in the following way
\begin{equation}\label{4h}
c_{q_{\xi}+i}(t)=   e^{ -\varphi(\lambda_{q})  t}\sum\limits_{m=0}^{k(q_{\xi})-i}H_{m}(\varphi, \lambda_{q},t)c_{q_{\xi}+i+m},\,i=0,1,2,...,k(q_{\xi}),
\end{equation}
here $\lambda_{q}=1/\mu_{q}$ is a characteristic number corresponding to $e_{q_{\xi}}.$
It is clear  that in any case, we have a limit
$
c_{n}(t)\rightarrow \tilde{c}_{n},\,t\rightarrow +\,0,
$
where  a value $\tilde{c}_{n}$ can be calculated directly due to the formula  \eqref{4h}. For instance in the case $\varphi=z ,\,z\in \mathbb{C},$ we have $\tilde{c}_{n}= c_{n}.$
Generalizing the definition given in \cite[p.71]{firstab_lit:Hardy}, we will say that series \eqref{3h} converges  to the element $f$ in the sense $(A,\lambda,\varphi),$ if there exists a sequence of the natural numbers $\{N_{j}\}_{1}^{\infty}$ such that
$$
f=\lim\limits_{t\rightarrow+0}\lim\limits_{j\rightarrow\infty}\sum\limits_{n=1}^{N_{j}}c_{n}(t)e_{n}.
$$
Note that   sums of the latter relation forms a subsequence of the partial sums of the series \eqref{3h}.

  We need the following lemmas \cite{firstab_lit:1Lidskii}, in the adopted form  also see \cite{firstab_lit:1kukushkin2021}. In spite of the fact that the scheme of the Lemma \ref{L3b} proof is the same we present it in the expanded form for the reader convenience.
  Further, considering an arbitrary  compact operator $B: \mathfrak{H}\rightarrow \mathfrak{H}$ such that
$
\Theta(B)\subset \mathfrak{L}_{0}(\theta),\, \theta<\pi,
$
we put the following contour   in correspondence to the operator
\begin{equation*}
\vartheta(B):=\left\{\lambda:\;|\lambda|=r>0,\,|\mathrm{arg} \lambda|\leq \theta+\varsigma\right\}\cup\left\{\lambda:\;|\lambda|>r,\; |\mathrm{arg} \lambda|=\theta+\varsigma\right\},
\end{equation*}
where $\varsigma>0$ is an arbitrary small number, the number $r$ is chosen so that the operator  $ (I-\lambda B)^{-1} $ is regular within the corresponding closed circle. Here, we should note that the compactness property of $B$ gives us the fact   $(I-\lambda B)^{-1}\in \mathcal{B}(\mathfrak{H}),\,\lambda \in \mathbb{C}\setminus \mathrm{ int} \vartheta\, (B).$ It can be proved easily if we note that in accordance with the Corollary 3.3 \cite[p.268]{firstab_lit:kato1980}, we have $\mathrm{P}(B)\subset \mathbb{C}\setminus \overline{\Theta(B)}.$

\begin{lem} \label{L1} Assume that $B$ is a compact  operator,  $\Theta(B)\subset \mathfrak{L}_{0}(\theta),\, \theta<\pi,$ then on each ray $\zeta$ containing the point zero and not belonging to the sector $\mathfrak{L}_{0}(\theta)$ as well as the  real axis, we have
$$
\|(I-\lambda B)^{-1}\|\leq \frac{1}{\sin\psi_{0}},\,\lambda\in \zeta,
$$
where $\,\psi_{0} = \min \{|\mathrm{arg}\zeta -\theta|,|\mathrm{arg}\zeta +\theta|\}.$
\end{lem}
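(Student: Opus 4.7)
The plan is to reduce the operator-norm bound to a claim about the Euclidean distance from the point $1\in\mathbb{C}$ to a rotated sector, and then to verify that planar inequality. The sectoriality of $B$ does all of the work once the reduction is in place.

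First, for any unit vector $f\in\mathfrak{H}$ the Cauchy--Schwarz inequality gives
$$\|(I-\lambda B)f\|\;\ge\;\bigl|((I-\lambda B)f,f)\bigr|\;=\;|1-\lambda(Bf,f)|,$$
so since $\|(I-\lambda B)^{-1}\|^{-1}=\inf_{\|f\|=1}\|(I-\lambda B)f\|$, it suffices to prove $|1-\lambda(Bf,f)|\ge\sin\psi_{0}$ uniformly in $f$. Setting $\omega:=\arg\zeta$ and $\lambda=|\lambda|e^{i\omega}$, the hypothesis $\Theta(B)\subset\mathfrak{L}_{0}(\theta)$ allows us to write $(Bf,f)=\rho e^{i\phi}$ with $\rho\ge 0$ and $|\phi|\le\theta$, whence $\lambda(Bf,f)=te^{i\gamma}$ for some $t\ge 0$ and $\gamma\in[\omega-\theta,\omega+\theta]$. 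The task becomes
$$\inf\bigl\{\,|1-te^{i\gamma}|:\,t\ge 0,\;\gamma\in[\omega-\theta,\omega+\theta]\bigr\}\;\ge\;\sin\psi_{0}.$$
Expanding $|1-te^{i\gamma}|^{2}=(t-\cos\gamma)^{2}+\sin^{2}\gamma$ and minimising in $t\ge 0$ shows that the minimum equals $|\sin\gamma|$ when $\cos\gamma\ge 0$ and equals $1$ when $\cos\gamma<0$; the second case already yields the bound because $\sin\psi_{0}\le 1$.

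It remains to check $|\sin\gamma|\ge\sin\psi_{0}$ on the part of $[\omega-\theta,\omega+\theta]$ where $\cos\gamma\ge 0$. The hypotheses that $\zeta$ does not belong to $\mathfrak{L}_{0}(\theta)$ and is not the real axis guarantee $\psi_{0}>0$. By the conjugation symmetry of $\mathfrak{L}_{0}(\theta)$ we may assume $\omega\in(\theta,\pi)$, giving $\psi_{0}=\omega-\theta$. The admissible set splits into at most two pieces: on $[\omega-\theta,\min(\pi/2,\omega+\theta)]$ the function $\sin$ is monotone increasing and its minimum is $\sin(\omega-\theta)=\sin\psi_{0}$; and only in the extreme regime $\omega+\theta>3\pi/2$ does a second piece $[3\pi/2,\omega+\theta]$ appear, on which $|\sin\gamma|=\sin(2\pi-\gamma)$ is monotone decreasing in $\gamma$, and the bound follows from $2\pi-\omega-\theta>\omega-\theta$ together with monotonicity of $\sin$ on $(0,\pi/2)$. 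The main obstacle I anticipate is the bookkeeping in the wrap-around regime $\omega+\theta>\pi$; the key observation that makes it painless is that on $\gamma\in(\pi,3\pi/2]$ one has $\cos\gamma<0$ and the previous step already applies, so the only wrap-around range that actually needs the geometric comparison is the thin subinterval above $3\pi/2$.
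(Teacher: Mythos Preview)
Your argument is correct. The paper does not actually supply its own proof of this lemma---it merely cites \cite{firstab_lit:1Lidskii} and \cite{firstab_lit:1kukushkin2021}---and your numerical-range argument (bounding $\|(I-\lambda B)f\|$ below by $|1-\lambda(Bf,f)|$ and then computing the distance from $1$ to the rotated sector $\lambda\cdot\mathfrak{L}_{0}(\theta)$) is precisely the standard proof behind those references; indeed the paper itself invokes the same mechanism via Kato's Theorem~3.2 and Corollary~3.3 elsewhere. The only point worth stating explicitly is that your lower bound $\|(I-\lambda B)f\|\ge(\sin\psi_{0})\|f\|$ combined with the compactness of $B$ (Fredholm alternative) is what guarantees $(I-\lambda B)^{-1}$ exists as a bounded operator, so that the identity $\|(I-\lambda B)^{-1}\|^{-1}=\inf_{\|f\|=1}\|(I-\lambda B)f\|$ is legitimate.
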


\begin{lem}\label{L2}   Assume that a compact operator $B$ satisfies   the  condition   $B\in \tilde{\mathfrak{S}}_{\rho},$       then
for arbitrary numbers  $R,\kappa$   such that $R>0,\,0<\kappa<1,$ there exists a  circle $|\lambda|=\tilde{R},\,(1-\kappa)R<\tilde{R}<R,$ so that the following estimate holds
$$
\|(I-\lambda B )^{-1}\|_{\mathfrak{H}}\leq e^{\gamma(|\lambda|)|\lambda|^{\varrho}}|\lambda|^{m},\,|\lambda|=\tilde{R},\,m=[\varrho],\,\varrho\geq\rho,
$$
where
$$
\gamma(|\lambda|)= \beta ( |\lambda|^{m+1})  +C \beta(|C  \lambda| ^{m+1}),\;\beta(r )= r^{ -\frac{\varrho}{m+1} }\left(\int\limits_{0}^{r}\frac{n_{B^{m+1}}(t)dt}{t }+
r \int\limits_{r}^{\infty}\frac{n_{B^{m+1}}(t)dt}{t^{ 2  }}\right).
$$
\end{lem}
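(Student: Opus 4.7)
The plan is to reduce the resolvent estimate for $B$ to one for the power $B^{m+1}$, whose advantage is that it lies in the trace class. Indeed, since $m=[\varrho]\geq[\rho]$ and $\varrho<m+1$, the convergence exponent of $B^{m+1}$ does not exceed $\varrho/(m+1)<1$, so $B^{m+1}\in\mathfrak{S}_{1}$ and the classical Fredholm determinant $D(\mu):=\det(I-\mu B^{m+1})$ is an entire function of order at most $\varrho/(m+1)$. The desired estimate will then follow by combining an elementary algebraic reduction, an upper bound on $\log M_{D}$ derived from the counting function $n_{B^{m+1}}$, and a minimum-modulus argument that selects the radius $\tilde R$.

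First I would exploit the factorization $I-\lambda^{m+1}B^{m+1}=(I-\lambda B)\sum_{k=0}^{m}\lambda^{k}B^{k}$, which yields $(I-\lambda B)^{-1}=\bigl(\sum_{k=0}^{m}\lambda^{k}B^{k}\bigr)(I-\lambda^{m+1}B^{m+1})^{-1}$. The polynomial factor is bounded by $C|\lambda|^{m}$ for $|\lambda|\geq 1/\|B\|$ with $C$ depending only on $\|B\|$ and $m$, which accounts for the factor $|\lambda|^{m}$ in the lemma. The task thus reduces to proving $\|(I-\lambda^{m+1}B^{m+1})^{-1}\|\leq\exp\bigl(\gamma(|\lambda|)|\lambda|^{\varrho}\bigr)$ on a suitable circle $|\lambda|=\tilde R$.

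Setting $\mu:=\lambda^{m+1}$ and $T:=B^{m+1}$, a classical Gohberg--Krein inequality (see \cite{firstab_lit:1Gohberg1965}) delivers $\|(I-\mu T)^{-1}\|\leq|D(\mu)|^{-1}\prod_{n}(1+|\mu|s_{n}(T))$, so one needs an upper bound for the product and a lower bound for $|D(\mu)|$. The upper bound is the standard Hadamard/Jensen-type consequence of the representation of an entire function of finite order through its zero-counting function: the zeros of $D$ coincide with the reciprocal eigenvalues of $B^{m+1}$ and are majorized by the sequence $\{s_{n}^{-1}(B^{m+1})\}$, so the classical integral identity of \cite{firstab_lit:Eb. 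Levin} gives $\log\prod_{n}(1+|\mu|s_{n}(T))\leq \beta(|\mu|)\,|\mu|^{\varrho/(m+1)}$. Substituting $|\mu|=|\lambda|^{m+1}$ reproduces the first summand $\beta(|\lambda|^{m+1})|\lambda|^{\varrho}$ in $\gamma(|\lambda|)|\lambda|^{\varrho}$.

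The main obstacle is the lower bound on $|D|$; this is the only step that uses the freedom in choosing $\tilde R$. I would invoke the minimum-modulus theorem for entire functions of finite order (a Borel--Carath\'eodory type statement, \cite{firstab_lit:Eb. Levin}) with parameters $R$ and $\kappa$: this produces a radius $\tilde R\in((1-\kappa)R,R)$ such that on $|\mu|=\tilde R^{\,m+1}$ one has $\log|D(\mu)|^{-1}\leq C\log M_{D}(C|\mu|)\leq C\beta(C|\mu|)(C|\mu|)^{\varrho/(m+1)}$, with $C$ depending only on $\kappa$. Rewriting in terms of $\lambda$ and collecting the two contributions gives exactly the second summand $C\beta(|C\lambda|^{m+1})$ in $\gamma(|\lambda|)$; multiplying by the polynomial estimate of the first step yields the full bound of the lemma. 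The delicate point throughout is the bookkeeping of constants so that $\gamma$ takes precisely the form prescribed, which is routine once the minimum-modulus step is executed with the correct choice of auxiliary parameters.
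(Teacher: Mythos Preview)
The paper does not supply its own proof of this lemma: it is one of the statements imported from Lidskii \cite{firstab_lit:1Lidskii} and the author's earlier paper \cite{firstab_lit:1kukushkin2021} (see the sentence preceding Lemma~\ref{L1}), and only Lemma~\ref{L3b} is reproved in the text. Your sketch is precisely the classical Lidskii--Gohberg--Krein argument carried in those references: the algebraic reduction $(I-\lambda B)^{-1}=\bigl(\sum_{k=0}^{m}\lambda^{k}B^{k}\bigr)(I-\lambda^{m+1}B^{m+1})^{-1}$ producing the $|\lambda|^{m}$ factor, the trace-class bound for $T=B^{m+1}$ via the Fredholm determinant and the Gohberg--Krein inequality $\|(I-\mu T)^{-1}\|\leq |D(\mu)|^{-1}\prod_{n}(1+|\mu|s_{n}(T))$, the Jensen/Levin integral expression that converts the canonical product into the function $\beta$, and the Cartan minimum-modulus selection of the radius $\tilde R$ yielding the second summand $C\beta(|C\lambda|^{m+1})$. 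The only point worth flagging is that the minimum-modulus step must be carried out in the variable $\mu=\lambda^{m+1}$, so the annulus $(1-\kappa)R<|\lambda|<R$ becomes $((1-\kappa)R)^{m+1}<|\mu|<R^{m+1}$ and the Cartan-type exclusion of small disks must be verified to leave a full circle in that $\mu$-annulus; once $\kappa$ is fixed this is routine, and pulling back gives the required $\tilde R$.
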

\begin{lem}\label{L3b} Assume that   $B$ is a compact operator, $\varphi$ is an analytical function inside $\vartheta(B),$ then  in the pole $\lambda_{q}$ of the operator  $(I-\lambda B)^{-1},$ the residue of the vector  function $e^{-\varphi (\lambda) t}B(I-\lambda B)^{-1}\!f,\,(f\in \mathfrak{H}),$  equals to
$$
-\sum\limits_{\xi=1}^{m(q)}\sum\limits_{i=0}^{k(q_{\xi})}e_{q_{\xi}+i}c_{q_{\xi}+i}(t),
$$
where $m(q)$ is a geometrical multiplicity of the $q$-th eigenvalue,  $k(q_{\xi})+1$ is a number of elements in the $q_{\xi}$-th Jourdan chain,
$$
c_{q_{\xi}+j} (t):=e^{-\varphi(\lambda_{q}) t}\sum\limits_{m=0}^{k(q_{\xi})-j} c_{q_{\xi}+j+m} H_{m}(\varphi,\lambda_{q},t ).
$$
\end{lem}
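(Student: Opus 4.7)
The plan is to reduce the residue computation to a finite-dimensional calculation on the root subspace $\mathfrak{N}_{q}$. Using the invariant decomposition \eqref{1h}, I would write $f=f_{1}+f_{2}$ with $f_{1}\in\mathfrak{N}_{q}$ and $f_{2}\in\mathfrak{M}_{q}$; since both summands are invariant under $B$ and hence under the resolvent, and since $(I-\lambda B)^{-1}$ is holomorphic on $\mathfrak{M}_{q}$ at $\lambda_{q}$ (because $B-\mu_{q}I$ is invertible on $\mathfrak{M}_{q}$), the component $f_{2}$ contributes nothing to the residue. Thus it suffices to treat $f=f_{1}$, which is a finite linear combination of Jordan basis vectors $e_{q_{\xi}+i}$.

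Working chain by chain, I would use the fact that on each Jordan chain the operator $B$ decomposes as $\mu_{q}I+N$, where $N$ is the nilpotent downward shift with $N^{k(q_{\xi})+1}=0$. Setting $\alpha(\lambda):=1-\lambda\mu_{q}=-\mu_{q}(\lambda-\lambda_{q})$ and exploiting nilpotency, one verifies directly the closed form
$$
(I-\lambda B)^{-1}e_{q_{\xi}+j}=\sum_{k=0}^{j}\frac{\lambda^{k}}{\alpha(\lambda)^{k+1}}\,e_{q_{\xi}+j-k}
$$
by applying $I-\lambda B$ to the right-hand side (the result telescopes back to $e_{q_{\xi}+j}$). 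Applying $B$ and collapsing the coefficient of $e_{q_{\xi}+j-k}$ via the algebraic identity $\lambda\mu_{q}+\alpha(\lambda)=1$ then yields an explicit expression for $B(I-\lambda B)^{-1}e_{q_{\xi}+j}$ in which every singular factor is a pure power of $(\lambda-\lambda_{q})^{-1}$ multiplied by a factor holomorphic at $\lambda_{q}$.

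Multiplying by $e^{-\varphi(\lambda)t}$ and extracting the residue at $\lambda_{q}$ via the standard identity $\mathrm{Res}_{\lambda_{q}}\, h(\lambda)(\lambda-\lambda_{q})^{-k-1}=h^{(k)}(\lambda_{q})/k!$ produces a double sum indexed by the chain position $k$ and the order $m$ of differentiation falling on $e^{-\varphi(\lambda)t}$. The definition of $H_{m}(\varphi,z,t)$ is engineered precisely so that the scaled values $e^{-\varphi(\lambda_{q})t}H_{m}(\varphi,\lambda_{q},t)$ are the $m$-th Taylor coefficients of $e^{-\varphi(\lambda)t}$ at $\lambda_{q}$. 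After the reindexing $j=i+m$, each chain's contribution rearranges into exactly $-\sum_{i}c_{q_{\xi}+i}(t)\,e_{q_{\xi}+i}$ as defined by \eqref{4h}, and summing over $\xi$ yields the lemma.

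The main technical obstacle will be the combinatorial bookkeeping in the final step: tracking how the monomials in $\lambda$ coming from the Jordan-block expansion interact with the Leibniz expansion of the $k$-th derivative of $e^{-\varphi(\lambda)t}$ times the remaining rational factor, and recognising the output as precisely the pattern dictated by \eqref{4h}. Packaging the values $e^{-\varphi(\lambda_{q})t}H_{m}(\varphi,\lambda_{q},t)$ uniformly as Taylor coefficients of $e^{-\varphi(\lambda)t}$ at $\lambda_{q}$ reduces the final identification to a routine change of summation variable.
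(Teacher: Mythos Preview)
Your reduction to $\mathfrak{N}_{q}$ via \eqref{1h} and the chain-by-chain resolvent computation match the paper's strategy. The paper, however, first passes to the reciprocal variable $\zeta=1/\lambda$ through the identity $B(I-\lambda B)^{-1}=\lambda^{-2}\{(\lambda^{-1}I-B)^{-1}-\lambda I\}$, so that on each Jordan chain the resolvent takes the clean form \eqref{6h},
\[
(\zeta I-B)^{-1}e_{q_{\xi}+i}=\sum_{j=0}^{i}\frac{e_{q_{\xi}+j}}{(\zeta-\mu_{q})^{i-j+1}},
\]
with constant numerators. The residue at $\mu_{q}$ is then a direct application of Cauchy's formula to $e^{-\varphi(\zeta^{-1})t}$, and the functions $H_{m}(\varphi,\lambda_{q},t)$ actually appearing in the lemma are identified in the proof by
\[
e^{-\varphi(\lambda_{q})t}H_{m}(\varphi,\lambda_{q},t)=\frac{1}{m!}\,\frac{d^{m}}{d\zeta^{m}}\Bigl\{e^{-\varphi(\zeta^{-1})t}\Bigr\}\Big|_{\zeta=\mu_{q}}.
\]

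This is where your outline breaks. You assert that $e^{-\varphi(\lambda_{q})t}H_{m}$ is the $m$-th Taylor coefficient of $e^{-\varphi(\lambda)t}$ at $\lambda_{q}$ and that the final identification is a routine reindexing. It is not: in the $\lambda$-variable your resolvent coefficients carry the extra factors $\lambda^{k}/(-\mu_{q})^{k+1}$, and the residue of $\lambda^{j-l-1}e^{-\varphi(\lambda)t}/\alpha(\lambda)^{j-l+1}$ involves, via Leibniz, derivatives of the polynomial $\lambda^{j-l-1}$ together with powers of $\mu_{q}$. Already for a chain of length two (take $f=e_{q_{\xi}+1}$) the coefficient of $e_{q_{\xi}}$ in the residue computes to $-t\lambda_{q}^{2}\varphi'(\lambda_{q})e^{-\varphi(\lambda_{q})t}$, so that $H_{1}=t\lambda_{q}^{2}\varphi'(\lambda_{q})$ and not the $-t\varphi'(\lambda_{q})$ your interpretation predicts. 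Hence the ``routine change of summation variable'' does not close the argument as written: either make the substitution $\zeta=1/\lambda$ first, as the paper does, or keep the $\lambda$-variable but abandon the claim that the answer packages into $\lambda$-Taylor coefficients of $e^{-\varphi(\lambda)t}$ --- the $H_{m}$ of the lemma encode $\zeta$-derivatives of $e^{-\varphi(\zeta^{-1})t}$ at $\mu_{q}$, which is a genuinely different object.
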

\begin{proof} Consider an integral
$$
 \mathfrak{I}=\frac{1}{2\pi i}\oint\limits_{\vartheta_{q}} e^{-\varphi(\lambda)t}B(I-\lambda B)^{-1}fd\lambda ,\,f\in \mathrm{R}(B),
$$
where the interior of the contour $\vartheta_{q}$ does not contain any poles of the operator $(I-\lambda B)^{-1},$  except of $\lambda_{q}.$ Assume that  $\mathfrak{N}_{q}$ is a  root  space corresponding to $\lambda_{q}$ and  consider  a Jordan basis  \{$e_{q_{\xi}+i}\},\,i=0,1,...,k(q_{\xi}),\;\xi=1,2,...,m(q)$ in $\mathfrak{N}_{q}.$ Using decomposition of the Hilbert space in the direct sum \eqref{1h}, we can represent an element
$$
f=f_{1}+f_{2},
$$
where $f_{1}\in \mathfrak{N}_{q},\;f_{2}\in \mathfrak{M}_{q}.$ Note that the operator function $e^{-\varphi(\lambda)t}B(I-\lambda B)^{-1}f_{2}$ is regular in the interior of the contour $\vartheta_{q},$ it follows from the fact that $\lambda_{q}$ ia a normal eigenvalue (see the supplementary information). Hence, we have
$$
 \mathfrak{I}=\frac{1}{2\pi i}\oint\limits_{\vartheta_{q}} e^{-\varphi(\lambda)t}B(I-\lambda B)^{-1}f_{1}d\lambda.
$$
Using the formula
$$
B(I-\lambda B)^{-1}=\frac{1}{\lambda}\left\{(I-\lambda B)^{-1}-I   \right\}=\frac{1}{\lambda^{2}}\left\{\left(\frac{1}{\lambda}I- B\right)^{-1}-\lambda I   \right\},
$$
we obtain
$$
\mathfrak{I}=-\frac{1}{2\pi i}\oint\limits_{\tilde{\vartheta}_{q}} e^{-\varphi(\zeta^{-1})t}B(\zeta I-  B)^{-1}f_{1}d\zeta,\,\zeta=1/\lambda.
$$
Now, let us decompose the element $f_{1}$ on the corresponding Jordan basis, we have
\begin{equation}\label{5h}
f_{1}=\sum\limits_{\xi=1}^{m(q)}\sum\limits_{i=0}^{k(q_{\xi})}e_{q_{\xi}+i}c_{q_{\xi}+i}.
\end{equation}
In accordance with the  relation \eqref{2h}, we get
$$
Be_{q_{\xi}}=\mu_{q}e_{q_{\xi}},\;Be_{q_{\xi}+1}=\mu_{q}e_{q_{\xi}+1}+e_{q_{\xi}},...,Be_{q_{\xi}+k}=\mu_{q}e_{q_{\xi}+k}+e_{q_{\xi}+k-1}.
$$
Using this formula, we can prove the following relation
\begin{equation}\label{6h}
(\zeta I-  B)^{-1}e_{q_{\xi}+i}=\sum\limits_{j=0}^{i}\frac{e_{q_{\xi}+j}}{(\zeta-\mu_{q})^{i-j+1}}.
\end{equation}
Note that the case $i=0$ is trivial. Consider a case, when $i>0,$ we have
$$
\frac{(\zeta I-  B)e_{q_{\xi}+j}}{(\zeta-\mu_{q})^{i-j+1}}=\frac{\zeta e_{q_{\xi}+j}-Be_{q_{\xi}+j}}{(\zeta-\mu_{q})^{i-j+1}} = \frac{ e_{q_{\xi}+j}}{(\zeta-\mu_{q})^{i-j }}-\frac{e_{q_{\xi}+j-1}}{(\zeta-\mu_{q})^{i-j+1}},\,j>0,
$$
$$
\frac{(\zeta I-  B)e_{q_{\xi} }}{(\zeta-\mu_{q})^{i +1}}=  \frac{ e_{q_{\xi} }}{(\zeta-\mu_{q})^{i }}.
$$
Using these formulas, we obtain
$$
\sum\limits_{j=0}^{i}\frac{(\zeta I-  B)e_{q_{\xi}+j}}{(\zeta-\mu_{q})^{i-j+1}}= \frac{ e_{q_{\xi} }}{(\zeta-\mu_{q})^{i }}+  \frac{ e_{q_{\xi}+1}}{(\zeta-\mu_{q})^{i-1 }}-\frac{e_{q_{\xi} }}{(\zeta-\mu_{q})^{i }}+...
$$
$$
+ \frac{ e_{q_{\xi}+i}}{(\zeta-\mu_{q})^{i-i }}-\frac{e_{q_{\xi}+i-1}}{(\zeta-\mu_{q})^{i-i+1}}=
 \frac{ e_{q_{\xi}+i}}{(\zeta-\mu_{q})^{i-i }},
$$
what gives us the desired result. Now, substituting  \eqref{5h},\eqref{6h}, we get
$$
\mathfrak{I}=- \frac{1}{2\pi i}\sum\limits_{\xi=1}^{m(q)}\sum\limits_{i=0}^{k(q_{\xi})}c_{q_{\xi}+i}\sum\limits_{j=0}^{i} e_{q_{\xi}+j} \oint\limits_{\tilde{\vartheta}_{q}}\frac{ e^{-\varphi(\zeta^{-1})t}}{(\zeta-\mu_{q})^{i-j+1}}d\zeta.
$$
Note that the function $\varphi (\zeta^{-1})$ is analytic inside the interior of $\tilde{\vartheta}_{q},$ hence
$$
 \frac{1}{2\pi i}\oint\limits_{\tilde{\vartheta}_{q}}\frac{ e^{-\varphi(\zeta^{-1})t}}{(\zeta-\mu_{q})^{i-j+1}}d\zeta= \frac{1}{(i-j)!}\lim\limits_{\zeta\rightarrow\, \mu_{q}}\frac{d^{i-j}}{d\zeta^{\,i-j}}\left\{ e^{-\varphi(\zeta^{-1})t}\right\}=:e^{-\varphi(\lambda_{q})t}H_{i-j}(\varphi,\lambda_{q},t ).
$$
Changing the indexes, we have
$$
\mathfrak{I}=-  \sum\limits_{\xi=1}^{m(q)}\sum\limits_{i=0}^{k(q_{\xi})}c_{q_{\xi}+i}e^{-\varphi(\lambda_{q})t}\sum\limits_{j=0}^{i} e_{q_{\xi}+j} H_{i-j}(\varphi,\lambda_{q},t )= -\sum\limits_{\xi=1}^{m(q)}\sum\limits_{j=0}^{k(q_{\xi})}e_{q_{\xi}+j}e^{-\varphi(\lambda_{q})t}\sum\limits_{m=0}^{k(q_{\xi})-j} c_{q_{\xi}+j+m} H_{m}(\varphi,\lambda_{q},t )=
$$
$$
=-\sum\limits_{\xi=1}^{m(q)}\sum\limits_{j=0}^{k(q_{\xi})}e_{q_{\xi}+j}   c_{q_{\xi}+j} (t),
$$
where
$$
c_{q_{\xi}+j} (t):=e^{-\varphi(\lambda_{q})t}\sum\limits_{m=0}^{k(q_{\xi})-j} c_{q_{\xi}+j+m} H_{m}(\varphi,\lambda_{q},t ).
$$
The proof is complete.
\end{proof}
Note that using the reasonings of the last lemma, it is not hard to prove that
$$
c_{q_{\xi}+j} (t)\longrightarrow\sum\limits_{m=0}^{k(q_{\xi})-j}   c_{q_{\xi}+j+m} H_{m}(\varphi,\lambda_{q},0 ),\,t\rightarrow +0.
$$

\section{Main results}

 In this section, we have a challenge how to generalize results \cite{firstab_lit:1kukushkin2021} in the way to make an efficient tool for study abstract evolution equations with the operator function at the right-hand side. The operator function is supposed to be defined on the set of unbounded non-selfadjoint operators.
  First of all,  we consider   statements
with the  necessary  refinement caused by  the involved functions, here we should note that a particular case corresponding to a power function $\varphi$   was considered by Lidskii \cite{firstab_lit:1Lidskii}.  Secondly, we find conditions that guarantee convergence of the involved integral construction and formulate lemmas giving us a tool for further study. We consider   an analogue of the spectral family having involved the operators similar to Riesz projectors (see \cite[p.20]{firstab_lit:1Gohberg1965} ) and using a notion of the  convergence in the Abel-Lidskii sense. As  a main result we prove an existence and uniqueness theorem for evolution equation with the operator function  at the right-hand side. Finally, we discus  an approach   that  we can implement  to apply  the abstract theoretical results  to  concrete  evolution equations.
\begin{lem}\label{L4b} Assume that the operator $B$ satisfies conditions of Lemma \ref{L1},  the entire function $\varphi$ of the order less than a half  maps the inside of the contour $\vartheta(B)$ into the sector $\mathfrak{L}_{0}(\varpi),\,\varpi<\pi/2$  for a sufficiently large value $|z|,\,z\in \mathrm{int}\, \vartheta(B).$ Then the following relation holds
\begin{equation}\label{7a}
\lim\limits_{t\rightarrow+0}\frac{1}{2 \pi i}\int\limits_{\vartheta(B)}e^{-\varphi(\lambda) t}B(I-\lambda B)^{-1}fd\lambda=f,\,f\in \mathrm{R}(B).
\end{equation}
\end{lem}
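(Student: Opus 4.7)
The plan is to exploit $f\in\mathrm{R}(B)$ by writing $f=Bg$ with $g\in\mathfrak{H}$ and using the algebraic identity
\[
B(I-\lambda B)^{-1}Bg=\frac{1}{\lambda^{2}}(I-\lambda B)^{-1}g-\frac{g}{\lambda^{2}}-\frac{Bg}{\lambda}
\]
to split $\mathfrak{I}(t):=(2\pi i)^{-1}\int_{\vartheta(B)}e^{-\varphi(\lambda)t}B(I-\lambda B)^{-1}f\,d\lambda$ into an operator-valued piece $A_{1}(t)$ carrying the improved weight $1/\lambda^{2}$ and two scalar pieces $A_{2}(t),A_{3}(t)$. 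The scalar pieces will be shown to vanish identically for $t>0$ by Cauchy's theorem applied to a suitable sector closure, while the operator piece will be shown to converge to $f$ as $t\to+0$ via dominated convergence and a Dunford--Riesz functional-calculus identification at $t=0$.

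Absolute convergence of $\mathfrak{I}(t)$ for each $t>0$ follows from Lemma~\ref{L1}, which bounds $\|(I-\lambda B)^{-1}\|\le 1/\sin\varsigma$ uniformly on the rays $|\arg\lambda|=\theta+\varsigma$, combined with $|e^{-\varphi(\lambda)t}|\le\exp(-t\cos(\varpi)|\varphi(\lambda)|)$ from the sectorial hypothesis on $\varphi$ and the growth of $|\varphi|$ along rays inside the sector (Phragm\'en--Lindel\"of/Wiman for entire functions of positive order with sectorial values). The scalar pieces $A_{2},A_{3}$ are handled by closing $\vartheta(B)$ with an arc at $|\lambda|=R_{j}$ inside the sector along a Wiman sequence where $m_{\varphi}(R_{j})\to\infty$: the $\varphi$-damping kills the arc integral, the enclosed annular sector contains no singularity of the scalar integrand (the pole at $\lambda=0$ lies outside), and Cauchy's theorem forces $A_{2}(t)=A_{3}(t)=0$. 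Hence $\mathfrak{I}(t)=A_{1}(t)$.

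For $A_{1}(t)=\frac{1}{2\pi i}\int_{\vartheta(B)}\frac{e^{-\varphi(\lambda)t}}{\lambda^{2}}(I-\lambda B)^{-1}g\,d\lambda$, the weight $1/\lambda^{2}$ together with the resolvent bound on the rays and boundedness of $(I-\lambda B)^{-1}$ on the small arc provides an integrable $t$-independent dominant, so the dominated convergence theorem yields $A_{1}(t)\to A_{1}(0)$. To compute $A_{1}(0)$ I would apply the substitution $\mu=1/\lambda$, which maps $\vartheta(B)$ onto the boundary of a bounded pie-slice region in the $\mu$-plane encircling $\sigma(B)$; after transformation $A_{1}(0)=-\frac{1}{2\pi i}\oint\mu(\mu I-B)^{-1}g\,d\mu$. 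Using $\mu(\mu I-B)^{-1}=I+B(\mu I-B)^{-1}$ together with $\oint d\mu=0$ on the closed contour, this reduces to $-\frac{1}{2\pi i}\oint B(\mu I-B)^{-1}g\,d\mu$, which by the Dunford--Riesz functional calculus equals $Bg=f$ since the $\mu$-contour encircles the whole spectrum $\sigma(B)$.

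The main obstacle will be the rigorous evaluation at $t=0$ when the $\mu$-contour meets the spectrum of $B$ at $\mu=0$, an accumulation point of eigenvalues where $(\mu I-B)^{-1}$ is unbounded. One must either perturb the $\mu$-contour slightly off $\mu=0$ with a small excision arc and pass to the limit, controlling the excision integral by Lemma~\ref{L1} together with the fact that $f=Bg\in\mathrm{R}(B)$ tames the singularity at $\mu=0$, or equivalently run the whole argument via residue calculus at the characteristic numbers $\lambda_{q}$ in the $\lambda$-plane using Lemma~\ref{L3b}, with the arc contribution at $|\lambda|=R_{j}$ then needing to be controlled through a Lemma~\ref{L2}-type bound on $\|(I-\lambda B)^{-1}\|$ inside the sector---a balance resting on $\varphi$ having entire order strictly less than $1/2$.
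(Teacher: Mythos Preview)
Your decomposition is exactly the paper's: writing $f=Bg$ and using $B^{2}(I-\lambda B)^{-1}=\lambda^{-2}\{(I-\lambda B)^{-1}-(I+\lambda B)\}$ is precisely what the paper does (with $g=Wf$), and your treatment of the scalar pieces $A_{2},A_{3}$ via closing the contour inside the sector on a Wiman sequence $R_{n}$ is identical to the paper's handling of its $I_{2}(t)$.

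The one substantive difference is the evaluation of $A_{1}(0)$. You substitute $\mu=1/\lambda$ and land on a $\mu$-contour that runs into $\mu=0\in\sigma(B)$, which you correctly flag as the main obstacle. The paper avoids this entirely: it stays in the $\lambda$-plane and closes $\vartheta(B)$ on the \emph{outside} of the sector with the complementary arc $\tilde{\vartheta}_{R}(B)=\{|\lambda|=R,\ \theta+\varsigma\le\arg\lambda\le 2\pi-\theta-\varsigma\}$. On that arc the resolvent bound $\|(\lambda^{-1}I-B)^{-1}\|\le R/\sin\varsigma$ from Kato kills the arc integral, and the only singularity enclosed is the pole of $\lambda^{-2}$ at $\lambda=0$, whose residue is $\frac{d}{d\lambda}(I-\lambda B)^{-1}Wf\big|_{\lambda=0}=BWf=f$. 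This is cleaner than your excision-at-$\mu=0$ fix, though your fix does work: on a small arc $|\mu|=\epsilon$ one has $\|\mu(\mu I-B)^{-1}\|=\|(I-\lambda B)^{-1}\|\le 1/\sin\varsigma$, so the excised integral is $O(\epsilon)$.

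Your second proposed alternative---running residues over all characteristic numbers via Lemma~\ref{L3b} and controlling arcs inside the sector via a Lemma~\ref{L2}-type bound---would not go through here: Lemma~\ref{L2} requires $B\in\tilde{\mathfrak{S}}_{\rho}$, which is not among the hypotheses of the present lemma (only compactness and the sectorial condition on $\Theta(B)$ are assumed). That machinery enters only later, in Theorem~\ref{T1}.
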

\begin{proof}
Using the formula
$$
B^{2}(I-\lambda B)^{-1}= \frac{1}{\lambda^{2}}\left\{\left(I- \lambda B\right)^{-1}-(I+\lambda B)   \right\},
$$
we obtain
$$
 \frac{1}{2 \pi i}\int\limits_{\vartheta(B)}e^{-\varphi(\lambda) t}B(I-\lambda B)^{-1}fd\lambda=
 \frac{1}{2 \pi i}\int\limits_{\vartheta(B)}e^{-\varphi(\lambda) t} \lambda^{-2}  \left(I- \lambda B\right)^{-1} Wfd\lambda -
$$
$$
- \frac{1}{2 \pi i}\int\limits_{\vartheta(B)}e^{-\varphi(\lambda) t}\lambda^{-2} (I+\lambda B)  Wfd\lambda =I_{1}(t)+I_{2}(t).
$$
Consider $I_{1}(t).$ Since this improper integral is uniformly convergent regarding $t,$ this fact can be established easily if we apply Lemma \ref{L1}, then using the theorem on the connection with the simultaneous limit and the repeated limit, we get
$$
\lim\limits_{t\rightarrow+0}I_{1}(t)= \frac{1}{2 \pi i}\int\limits_{\vartheta(B)}  \lambda^{-2}  \left(I- \lambda B\right)^{-1} Wfd\lambda.
$$
define a contour  $\vartheta_{R}(B):= \mathrm{Fr}\big\{\{\lambda:\, |\lambda|<R\}\setminus \mathrm{int }\,\vartheta(B) \}\big\}$ and let us prove that
\begin{equation}\label{8a}
 \frac{1}{2\pi i}\oint\limits_{\vartheta_{R}(B)}  \lambda^{-2}  \left(I- \lambda B\right)^{-1} Wfd\lambda\rightarrow \frac{1}{2 \pi i}\int\limits_{\vartheta(B)}  \lambda^{-2}  \left(I- \lambda B\right)^{-1} Wfd\lambda,\;R\rightarrow \infty.
\end{equation}
Consider a decomposition of the contour $\vartheta_{R}(B)$ on terms   $\tilde{\vartheta}_{R}(B):=\{\lambda:\,|\lambda|=R,\, \theta +\varsigma\leq\mathrm{arg} \lambda \leq 2\pi- \theta -\varsigma  \},\,\hat{\vartheta}_{R}:= \{\lambda:\,|\lambda|=r,\,|\mathrm{arg} \lambda |\leq\theta +\varsigma\}\cup
\{\lambda:\,r<|\lambda|<R,\, \mathrm{arg} \lambda  =\theta +\varsigma\}\cup
\{\lambda:\,r<|\lambda|<R,\, \mathrm{arg} \lambda  =-\theta -\varsigma\}.$
 It is clear that
$$
\frac{1}{2\pi i}\oint\limits_{\vartheta_{R}(B)}  \lambda^{-2}  \left(I- \lambda B\right)^{-1} Wfd\lambda =
\frac{1}{2\pi i}\int\limits_{\tilde{\vartheta}_{R}(B)}  \lambda^{-2}  \left(I- \lambda B\right)^{-1} Wfd\lambda+
 $$
 $$
 +\frac{1}{2\pi i}\int\limits_{\hat{\vartheta}_{R}}  \lambda^{-2}  \left(I- \lambda B\right)^{-1} Wfd\lambda.
$$
Let us show that the first summand tends to zero when $R\rightarrow\infty,$ we have
$$
 \left\|\,\int\limits_{\tilde{\vartheta}_{R}(B)}  \lambda^{-2}  \left(I- \lambda B\right)^{-1} Wfd\lambda\right\|_{\mathfrak{H}}\leq  R^{-2}\int\limits_{\theta +\varsigma}^{2\pi - \theta -\varsigma}     \left\|\left(I\lambda^{-1}-  B\right)^{-1} Wf\right\|_{\mathfrak{H}}d\,  \mathrm{arg} \lambda.
$$
Applying Corollary 3.3,  Theorem 3.2  \cite[p.268]{firstab_lit:kato1980}, we have
$$
\left\|\left(I\lambda^{-1}-  B\right)^{-1} \right\|_{\mathfrak{H}}\leq R/\sin \varsigma ,\,\lambda \in \tilde{\vartheta}_{R}(B).
$$
Substituting this estimate to the last integral,
 we obtain the desired result. Thus, taking into account the fact
$$
\frac{1}{2\pi i}\int\limits_{\hat{\vartheta}_{R}}  \lambda^{-2}  \left(I- \lambda B\right)^{-1} Wfd\lambda\rightarrow \frac{1}{2\pi i}\int\limits_{ \vartheta(B)  }  \lambda^{-2}  \left(I- \lambda B\right)^{-1} Wfd\lambda ,\,R\rightarrow \infty,
$$
we obtain  \eqref{8a}. Having noticed that the following integral can be calculated as a residue at the point zero, i.e.
$$
\frac{1}{2\pi i}\oint\limits_{\vartheta_{R}(B)}  \lambda^{-2}  \left(I- \lambda B\right)^{-1} Wfd\lambda =\lim\limits_{\lambda\rightarrow0}\frac{d (I-\lambda B)^{-1} }{d\lambda }W f= f,
$$
we get
$$
\frac{1}{2 \pi i}\int\limits_{\vartheta(B)}  \lambda^{-2}  \left(I- \lambda B\right)^{-1} Wfd\lambda=f.
$$
Hence $I_{1}(t)\rightarrow f,\,t\rightarrow+0.$ Let us show that $I_{2}(t)=0.$ For this purpose, let us  consider  a  contour
  $\vartheta_{R}(B)=\tilde{\vartheta}_{ R}\cup \hat{\vartheta}_{R},$  where    $\tilde{\vartheta}_{ R}:=\{\lambda:\,|\lambda|=R,\,|\mathrm{arg} \lambda |\leq\theta +\varsigma\}$ and  $\hat{\vartheta}_{R}$ is previously defined. It is clear that
$$
\frac{1}{2\pi i}\oint\limits_{\vartheta_{R}(B)}  \lambda^{-2} e^{-\varphi(\lambda)t} \left(I+ \lambda B\right)  Wfd\lambda=\frac{1}{2\pi i}\int\limits_{\tilde{\vartheta}_{ R}}  \lambda^{-2} e^{-\varphi(\lambda)t} \left(I+ \lambda B\right)  Wfd\lambda+
 $$
 $$
+\frac{1}{2\pi i}\int\limits_{\hat{\vartheta}_{R}}  \lambda^{-2} e^{-\varphi(\lambda)t} \left(I+ \lambda B\right)  Wfd\lambda.
$$
Considering the second term  having   taken  into account  the  definition of the improper integral,   we conclude    that if we show that  there exists such a sequence $\{R_{n}\}_{1}^{\infty},\,R_{n}\uparrow\infty$  that
\begin{equation}\label{9a}
\frac{1}{2\pi i}\int\limits_{\tilde{\vartheta}_{ R_{n}}}  \lambda^{-2} e^{-\varphi(\lambda)t} \left(I+ \lambda B\right)  Wfd\lambda\rightarrow 0,\,n\rightarrow \infty,
\end{equation}
then we obtain
\begin{equation}\label{10a}
 \frac{1}{2\pi i}\oint\limits_{\vartheta_{R_{n}}(B)}  \lambda^{-2} e^{-\varphi(\lambda)t} \left(I+ \lambda B\right)  Wfd\lambda\rightarrow \frac{1}{2 \pi i}\int\limits_{\vartheta(B)}  \lambda^{-2} e^{-\varphi(\lambda)t} \left(I+ \lambda B\right)  Wfd\lambda,\;R\rightarrow \infty.
\end{equation}
 Using the lemma conditions, we can accomplish the following estimation
 \begin{equation}\label{11a}
|e^{-\varphi(\lambda)t}|= e^{- \mathrm{Re}\,\varphi(\lambda)t}\leq e^{- C|\varphi(\lambda)|t},\, \lambda \in \tilde{\vartheta}_{ R},
\end{equation}
where $R$ is sufficiently large.  Using the condition imposed upon the order of the entire function and  applying  the Wieman theorem  (Theorem 30 \S 18 Chapter I \cite{firstab_lit:Eb. Levin}),  we can claim that there exists such a sequence $\{R_{n}\}_{1}^{\infty},\,R_{n}\uparrow \infty$ that
\begin{equation*}
\forall \varepsilon>0,\,\exists N(\varepsilon):\,e^{- C|\varphi(\lambda )|t}\leq e^{- C m_{\varphi}(R_{n})t}\leq e^{- C t[M_{\varphi}(R_{n})]^{\cos \pi \phi-\varepsilon}},\,\lambda \in \tilde{\vartheta}_{ R_{n}},\,n>N(\varepsilon),
\end{equation*}
where $\phi$ is the order of the entire  function $\varphi.$
Using this estimate, we get
$$
 \left\|\,\int\limits_{\tilde{\vartheta}_{ R_{n}}}  \lambda^{-2} e^{-\varphi(\lambda)t} \left(I+ \lambda B\right)  Wfd\lambda\right\|_{\mathfrak{H}}\leq C e^{- C t[M_{\varphi}(R_{n})]^{\cos \pi \phi-\varepsilon}}\|Wf\|_{\mathfrak{H}} \int\limits_{-\theta-\varsigma}^{\theta+\varsigma}d\xi.
$$
It is clear that if the order $\phi$ less than a half then we obtain \eqref{9a} and as a consequence \eqref{10a}.   Since the operator function under the integral is analytic, then
$$
\oint\limits_{\vartheta_{R_{n}}(B)}  \lambda^{-2} e^{-\varphi(\lambda)t} \left(I+ \lambda B\right)  Wfd\lambda=0,\,n\in \mathbb{N}.
$$
Combining this relation with \eqref{10a}, we  obtain the fact $I_{2}(t)=0.$ The proof is complete.
\end{proof}
\begin{remark}Note that the statement of the lemma is not true if the order equals zero, in this case we cannot apply the Wieman theorem   (more detailed see the proof of the Theorem 30 \S 18 Chapter I \cite{firstab_lit:Eb. Levin}). At the same time the proof can be easily transformed for the case corresponding to a polynomial function. Here, we should note that the reasonings are the same, we  have to impose   conditions upon the polynomial to satisfy the lemma conditions and  establish an estimate analogous to  \eqref{11a}.   Now assume that $\varphi(z)=c_{0}+c_{1}z+...+c_{n}z^{n},\,z\in \mathbb{C},$ by easy calculations we see that   the condition $$\max\limits_{k=0,1,...,n}(|\mathrm{arg} c_{k}|+k\theta)<\pi/2,$$ gives us  $|\mathrm{arg }\varphi(z)|<\pi/2,\,z\in \mathrm{int} \vartheta(B).$ Thus, we have the fulfilment of the estimate  \eqref{11a}. It can be established easily that $m_{\varphi}(|z|)\rightarrow \infty,\,|z|\rightarrow \infty.$ Combining this fact with \eqref{11a} and preserving the scheme of the reasonings presented in Lemma \ref{L4b}, we obtain \eqref{7a}.
\end{remark}

Bellow, we consider an invertible operator $B$ and use a notation $W:=B^{-1}.$ This agrement is justified by the significance of the operator with a compact resolvent, the detailed information on which spectral properties can be found in the papers   cited in the introduction section.
Consider a function $\varphi$ that can be represented by a Laurent series about the point zero.  Denote by
\begin{equation}\label{12a}
\varphi(W):=\sum\limits_{n=-\infty}^{\infty}  c_{n}  W^{n}
\end{equation}
a formal construction called by a function of the operator, where $c_{n}$ are coefficients corresponding to the function $\varphi.$
The lemma  given bellow are devoted to the study of the conditions under which being imposed the series of operators \eqref{12a} converges on some elements of the Hilbert space $\mathfrak{H},$   thus the operator $\varphi(W)$ is defined.

\begin{lem}\label{5a}Assume that $B$ is a compact  operator,  $\Theta(B)\subset \mathfrak{L}_{0}(\theta),\, \theta<\pi/2,$
\begin{equation}\label{13a}
\varphi(z)=\sum\limits_{n=-\infty}^{s}c_{n}z^{n},\,z\in  \mathbb{C},\,s\in \mathbb{N},\; \max\limits_{n=0,1,...,s}(|\mathrm{arg} c_{n}|+n\theta)<\pi/2,
\end{equation}
{\bf then
\begin{equation}\label{14a}
 \frac{1}{2\pi i}\int\limits_{\vartheta(B)}\varphi(\lambda) e^{-\varphi(\lambda)  t} B(I-\lambda B)^{-1}fd\lambda= \varphi(W)u(t);\;\; \lim\limits_{t\rightarrow +0}\varphi(W)u(t)= \varphi(W)f,
\end{equation}
where}
$$
u(t):=\frac{1}{2\pi i}\int\limits_{\vartheta(B)}  e^{-\varphi(\lambda)  t} B(I-\lambda B)^{-1}fd\lambda,\,f\in \mathrm{D}(W^{s}).
$$
\end{lem}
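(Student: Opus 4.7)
My strategy is to pull $\varphi(W)$ under the contour integral by elementary resolvent identities, and then to obtain the limit $t\to +0$ by applying Lemma~\ref{L4b} termwise to the Laurent modes of $\varphi$. It is convenient to split $\varphi=\varphi_{+}+\varphi_{-}$ with $\varphi_{+}(z)=\sum_{n=0}^{s}c_{n}z^{n}$ and $\varphi_{-}(z)=\sum_{n\le -1}c_{n}z^{n}$. The hypothesis $\max_{n=0,\ldots,s}(|\arg c_{n}|+n\theta)<\pi/2$ is exactly the polynomial condition of the Remark following Lemma~\ref{L4b}, and since $\varphi_{-}$ is bounded at infinity it yields $|\arg\varphi(\lambda)|<\pi/2$ for $\lambda$ of large modulus in $\mathrm{int}\,\vartheta(B)$; in particular the estimate \eqref{11a} is at our disposal along $\vartheta(B)$.

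The first step is to use $W-\lambda = W(I-\lambda B)$ on $\mathrm{D}(W)$, which rewrites $B(I-\lambda B)^{-1}=(W-\lambda)^{-1}$. For every integer $n$ one has the elementary expansion
\begin{equation*}
W^{n}(W-\lambda)^{-1}=\lambda^{n}(W-\lambda)^{-1}+P_{n}(\lambda),
\end{equation*}
where $P_{n}(\lambda)$ is a finite Laurent polynomial in $\lambda$ whose coefficients are fixed powers of $W$ (explicitly $P_{n}(\lambda)=\sum_{k=0}^{n-1}\lambda^{k}W^{n-1-k}$ when $n\ge 1$) or of $B$ (when $n\le -1$). The assumption $f\in\mathrm{D}(W^{s})$, together with the exponential decay of $e^{-\varphi(\lambda)t}$ for $t>0$, places $u(t)$ in $\bigcap_{k\ge 0}\mathrm{D}(W^{k})$, which allows commuting $W^{n}$ with the integral via closedness. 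Substituting the expansion termwise into $\sum_{n}c_{n}W^{n}u(t)$ splits it into the desired integral $\tfrac{1}{2\pi i}\int_{\vartheta(B)}\varphi(\lambda)e^{-\varphi(\lambda)t}B(I-\lambda B)^{-1}f\,d\lambda$ plus error contributions of the form $(\text{fixed operator})\cdot\tfrac{1}{2\pi i}\int_{\vartheta(B)}\lambda^{m}e^{-\varphi(\lambda)t}\,d\lambda$, $m\in\mathbb{Z}$. These error integrals all vanish: closing $\vartheta(B)$ with the circular arc $|\lambda|=R,\,|\arg\lambda|\le\theta+\varsigma$ produces a closed annular-sector contour whose interior excludes the origin, so the integrand is holomorphic there, and the arc contribution tends to zero as $R\to\infty$ by the polynomial-growth estimate of the Remark following Lemma~\ref{L4b}.

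For the limit I would rewrite, for each relevant $n$, $W^{n}u(t)=\tfrac{1}{2\pi i}\int_{\vartheta(B)}e^{-\varphi(\lambda)t}B(I-\lambda B)^{-1}(W^{n}f)\,d\lambda$, obtained by commuting $W^{n}$ with the resolvent factor on $\mathrm{D}(W^{n})$. For every $n\le s-1$ one has $W^{n}f\in\mathrm{D}(W)=\mathrm{R}(B)$, and the polynomial version of Lemma~\ref{L4b} gives $W^{n}u(t)\to W^{n}f$. For $n\le -1$, $W^{n}=B^{|n|}$ is bounded, the convergence $u(t)\to f$ propagates through, and norm convergence of $\varphi_{-}(W)=\sum c_{-k}B^{k}$ (which is implicit in the well-posedness of the formal sum \eqref{12a}) produces $\varphi_{-}(W)u(t)\to\varphi_{-}(W)f$. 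Assembling these pieces yields $\varphi(W)u(t)\to\varphi(W)f$.

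The principal difficulty I foresee is the top mode $n=s$: since $W^{s}f$ is not assumed to lie in $\mathrm{R}(B)$, Lemma~\ref{L4b} cannot be applied directly to the integral representation of $W^{s}u(t)$. I intend to bypass this using closedness of $W^{s}$: the convergences $W^{n}u(t)\to W^{n}f$ for $n\le s-1$, combined with the strong convergence (just established via the first identity) of $\tfrac{1}{2\pi i}\int_{\vartheta(B)}\varphi(\lambda)e^{-\varphi(\lambda)t}B(I-\lambda B)^{-1}f\,d\lambda$, pin down the limit of $W^{s}u(t)$ as $W^{s}f$. A subsidiary technicality is the rigorous interchange of the infinite Laurent sum with the contour integral, which falls out of absolute convergence of $\varphi(\lambda)$ on the contour (where $|\lambda|\ge r$) together with the exponential factor $e^{-\varphi(\lambda)t}$.
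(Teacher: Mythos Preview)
Your treatment of the first identity in \eqref{14a} coincides with the paper's: the same resolvent expansion $W^{n}(W-\lambda)^{-1}=\lambda^{n}(W-\lambda)^{-1}+P_{n}(\lambda)$ (the paper writes it in the form \eqref{15a}), the same vanishing of $\int_{\vartheta(B)}\lambda^{m}e^{-\varphi(\lambda)t}\,d\lambda$ by closing the contour through the arc $\tilde{\vartheta}_{R}$, and closedness of $W$ to pull $W^{n}$ through the integral sign. For the limit $t\to+0$ the paper takes a different route from yours: it rewrites each mode so that the relevant integral carries a $\lambda^{-2}$ weight and is therefore absolutely and uniformly convergent in $t$; the limit is then computed as the residue of $\lambda^{-2}(I-\lambda B)^{-1}W^{n+1}f$ at $\lambda=0$.

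Your termwise use of Lemma~\ref{L4b} is fine for $n\le s-1$ and for the principal part, but the closedness argument you propose for the top mode $n=s$ is circular. You claim the limit of $W^{s}u(t)$ is pinned down by ``the strong convergence (just established via the first identity)'' of $\int_{\vartheta(B)}\varphi(\lambda)e^{-\varphi(\lambda)t}B(I-\lambda B)^{-1}f\,d\lambda$. But the first identity is only an equality for each fixed $t>0$; it asserts nothing about convergence as $t\to+0$. To isolate $c_{s}W^{s}u(t)$ by subtraction and then invoke closedness of $W^{s}$, you would first need an \emph{independent} proof that the full integral (equivalently $\varphi(W)u(t)$) converges---which is exactly the second relation in \eqref{14a} you are trying to prove. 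Closedness only identifies a limit once you already know one exists; it does not manufacture convergence. The paper's $\lambda^{-2}$ device, producing an integral $\int_{\vartheta(B)}\lambda^{-2}(I-\lambda B)^{-1}W^{n+1}f\,d\lambda$ that is absolutely convergent and directly computable, is precisely the missing ingredient here.
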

\begin{proof}
Consider a decomposition of the  Laurent series on  two terms
$$
\varphi_{1}(z)=\sum\limits_{n=0}^{s}c_{n}z^{n};\;\varphi_{2}(z)=\sum\limits_{n=1}^{\infty}c_{-n}z^{-n}.
$$
Consider an obvious relation
\begin{equation}\label{15a}
  \lambda^{k} B^{k}(E-\lambda B)^{-1}=(E-\lambda B)^{-1}-(E+\lambda B+...+\lambda^{k-1}B^{k-1}),\;k\in \mathbb{N}.
\end{equation}
It gives us the following representation
\begin{equation}\label{16x}
\frac{1}{2\pi i}\int\limits_{\vartheta(B)} \lambda^{n} e^{-\varphi(\lambda) t}B(I-\lambda B)^{-1}fd\lambda= I_{1n}(t)
+I_{2n}(t),\;n\in \mathbb{Z}^{-}\cup \{0,1,...,s\},
\end{equation}
 where
$$
I_{1n}:= \frac{1}{2\pi i}\int\limits_{\vartheta(B)}e^{-\varphi(\lambda) t} (I-\lambda B)^{-1}W^{n-1}fd\lambda,\;I_{2n}(t):=0,\,n=0,
$$
\begin{equation*}
 I_{2n}(t):=  \left\{ \begin{aligned}
 -\sum\limits_{k=0}^{n-1 }\beta_{k}(t)B^{k-n+1}  f,\;n> 0,\\
 \sum\limits_{k=-1}^{n}\beta_{k}(t)B^{k-n+1}  f  ,\;  n<0\, \\
\end{aligned}
 \right.,  \;\; \beta_{k}(t):=\frac{1}{2\pi i}\int\limits_{\vartheta(B)}e^{-\varphi(\lambda)t} \lambda^{k}d\lambda.
\end{equation*}
Let us show that  $\beta_{k}(t)=0,$  define a contour  $\vartheta_{R}(B):= \mathrm{Fr}\left\{\mathrm{int }\,\vartheta(B) \,\cap \{\lambda:\,r<|\lambda|<R \}\right\}$ and let us prove that
\begin{equation}\label{17a}
I_{Rk}(t):= \frac{1}{2\pi i}\oint\limits_{\vartheta_{R}(B)}e^{-\varphi(\lambda)t} \lambda^{k}d\lambda\rightarrow \beta_{k}(t),\;R\rightarrow \infty.
\end{equation}
Consider a decomposition of the contour $\vartheta_{R}(B)$ on terms   $\tilde{\vartheta}_{ R}:=\{\lambda:\,|\lambda|=R,\,|\mathrm{arg} \lambda |\leq\theta +\varsigma\}$ and
$\hat{\vartheta}_{R}:= \{\lambda:\,|\lambda|=r,\,|\mathrm{arg} \lambda |\leq\theta +\varsigma\}\cup
\{\lambda:\,r<|\lambda|<R,\, \mathrm{arg} \lambda  =\theta +\varsigma\}\cup
\{\lambda:\,r<|\lambda|<R,\, \mathrm{arg} \lambda  =-\theta -\varsigma\}.$
We have
$$
\frac{1}{2\pi i}\oint\limits_{\vartheta_{R}(B)}e^{-\varphi(\lambda)t} \lambda^{k}d\lambda=
 \frac{1}{2\pi i}\int\limits_{\tilde{\vartheta}_{ R}}e^{-\varphi(\lambda)t} \lambda^{k}d\lambda+
 \frac{1}{2\pi i}\int\limits_{\hat{\vartheta}_{R}}e^{-\varphi(\lambda)t} \lambda^{k}d\lambda.
$$
Having noticed that $I_{Rk}(t)=0,$ since the operator function under the integral is analytic inside the contour, we  come to the conclusion that to obtain the desired result, we should show
\begin{equation}\label{18a}
\frac{1}{2\pi i}\int\limits_{\tilde{\vartheta}_{ R}}e^{-\varphi(\lambda)t} \lambda^{k}d\lambda\rightarrow 0,\;R\rightarrow \infty.
\end{equation}
We have
\begin{equation*}
  \left|\,\int\limits_{\tilde{\vartheta}_{ R}}e^{-\varphi(\lambda)t}\lambda^{k}    d \lambda\,\right| \leq \,R^{k}\int\limits_{\tilde{\vartheta}_{ R}}|e^{-\varphi(\lambda)t}| |d \lambda|\leq R^{k+1}\int\limits_{-\theta-\varsigma}^{\theta+\varsigma}  e^{- t \mathrm{Re}\,\varphi(\lambda) } d \,\mathrm{arg} \lambda.
\end{equation*}
Consider a value   $\mathrm{Re}\,\varphi(\lambda),\, \lambda\in  \tilde{\vartheta}_{ R}$  for a sufficiently large value $R.$ Using the property of the principal part of the Laurent series in is not hard to prove that $\forall\varepsilon>0,\,\exists N(\varepsilon):|\varphi_{2}(\lambda)|<\varepsilon,\,R>N(\varepsilon).$ It follows easily from the condition  \eqref{13a} that $\mathrm{Re}\,\varphi_{1}(\lambda)\geq C|\varphi_{1}(\lambda)|,\,\lambda\in  \tilde{\vartheta}_{ R}.$ It is clear that
$|\varphi_{1}(\lambda)|\sim |c_{s}| R^{s},\,R\rightarrow\infty.$ Thus, we have
\begin{equation}\label{19z}
e^{- t \mathrm{Re}\,\varphi(\lambda) }\leq    e^{- Ct |\varphi(\lambda)| }\leq   e^{-C|\lambda|^{s} t},\,\lambda\in  \tilde{\vartheta}_{R}.
\end{equation}
Applying this estimate, we obtain
$$
\int\limits_{-\theta-\varsigma}^{\theta+\varsigma}  e^{- t \mathrm{Re}\,\varphi(\lambda) } d \,\mathrm{arg} \lambda \leq
  \int\limits_{-\theta-\varsigma}^{\theta+\varsigma}  e^{- C t |\varphi(\lambda)| }  d \,\mathrm{arg} \lambda\leq   e^{- C t  R^{s} }\int\limits_{-\theta-\varsigma}^{\theta+\varsigma}    d \,\mathrm{arg} \lambda.
$$
The latter estimate gives us \eqref{18a} from what follows \eqref{17a}. Therefore $\beta_{k}(t)=0$ and we obtain the fact  $I_{2n}(t)=0.$    Combining the fact of the operator $W$  closedness (see \cite[p.165]{firstab_lit:kato1980} ) with   the definition of the integral in the Riemann sense, we get easily
$$
 W^{n}u(t)=\frac{1}{2\pi i}\int\limits_{\vartheta(B)}e^{-\varphi(\lambda)t} B(I-\lambda B)^{-1}W^{n}fd\lambda,\,n=0,1,...,s.
$$
Thus, using the formula \eqref{16x}, we obtain
\begin{equation*}
 \frac{1}{2\pi i}\int\limits_{\vartheta(B)}\varphi_{1}(\lambda) e^{-\varphi(\lambda)  t} B(I-\lambda B)^{-1}fd\lambda= \varphi_{1}(W)u(t).
\end{equation*}
Consider a principal part of the Laurent series. Using the formula \eqref{16x},  we get for values $n\in \mathbb{N}$
$$
\frac{1}{2\pi i}\int\limits_{\vartheta(B)} \lambda^{-n} e^{-\varphi(\lambda)t}B(I-\lambda B)^{-1}fd\lambda =B^{n}u(t).
$$
 Not that  by virtue of a character of the convergence of the series principal part, we have
$$
  \left\|\sum\limits_{n=1}^{\infty}  c_{-n}   e^{-\varphi(\lambda) t}(I-\lambda B)^{-1}B^{n+1}f\,\right\|_{\mathfrak{H}}\leq C\|f\|_{\mathfrak{H}}\sum\limits_{n=1}^{\infty}  \left|c_{-n}\right| \cdot  \left\|B\right\|_{\mathfrak{H}}^{n+1} <\infty,\;\lambda\in \vartheta(B).
$$
Therefore
$$
 \int\limits_{\vartheta_{j}(B)} \varphi_{2}(\lambda) e^{-\varphi(\lambda)t}B(I-\lambda B)^{-1}fd\lambda=\sum\limits_{n=1}^{\infty}c_{-n}\!\!   \int\limits_{\vartheta_{j}(B)}  e^{-\varphi(\lambda) t}(I-\lambda B)^{-1}B^{n+1}fd \lambda,\,j\in \mathbb{N},
$$
where $$
\vartheta_{j}(B):=\left\{\lambda:\;|\lambda|=r>0,\,|\mathrm{arg} \lambda|\leq \theta+\varsigma\right\}\cup\left\{\lambda:\;r<|\lambda|<r_{j},\,r_{j}\uparrow \infty,\; |\mathrm{arg} \lambda|=\theta+\varsigma\right\}.
$$
Analogously to \eqref{19z}, we can easily get
\begin{equation}\label{20a}
 e^{-\mathrm{Re}\varphi(\lambda) t}\leq e^{-C|\varphi(\lambda)| t}\leq   e^{-C|\lambda|^{s} t},\,\lambda\in \vartheta(B).
\end{equation}
Applying this estimate, we obtain
$$
\left\|\sum\limits_{n=1}^{\infty}c_{-n}\!\!   \int\limits_{\vartheta_{j}(B)}  e^{-\varphi(\lambda) t}(I-\lambda B)^{-1}B^{n+1}fd \lambda\right\|_{\mathfrak{H}}\leq C \|f\|_{\mathfrak{H}}\sum\limits_{n=1}^{\infty}    |c_{-n}| \cdot\|B^{n+1}\|\!\int\limits_{\vartheta_{j}(B)}  e^{- C|\lambda|^{s} t} |d \lambda|\leq
$$
$$
 \leq C \|f\|_{\mathfrak{H}}\sum\limits_{n=1}^{\infty}    |c_{-n}| \cdot\|B\|^{n+1}\int\limits_{\vartheta(B)}  e^{-C|\lambda|^{s} t} |d \lambda|<\infty.
$$
Note that the uniform convergence  of the series  at the left-hand side with respect to $j$ follows from the latter estimate.
Reformulating      the well-known theorem of calculus  on the absolutely convergent series in   terms of the norm, we have
\begin{equation}\label{21a}
\frac{1}{2\pi i}\!\!\int\limits_{\vartheta(B)}\!\! \varphi_{2}(\lambda) e^{-\varphi(\lambda)t}B(I-\lambda B)^{-1}fd\lambda=\frac{1}{2\pi i}\sum\limits_{n=1}^{\infty} c_{-n}\!\!\!  \int\limits_{\vartheta(B)}  e^{-\varphi(\lambda) t} (I-\lambda B)^{-1}B^{n+1}fd \lambda=
 \varphi_{2}(W)u(t).
\end{equation}
Thus, we obtain the first relation \eqref{14a}. Let us establish the second relation \eqref{14a}. Using the formula \eqref{15a}, we  obtain
$$
\frac{1}{2\pi i}\int\limits_{\vartheta(B)} \lambda^{n} e^{-\varphi(\lambda) t}B(I-\lambda B)^{-1}fd\lambda= I_{1n}(t)
+I_{2n}(t),\;n\in \mathbb{Z}^{-}\cup \{0,1,...,s\},
$$
where
$$
I_{1n}(t):= \frac{1}{2\pi i}\int\limits_{\vartheta(B)}e^{-\varphi(\lambda) t}\lambda^{-2}(I-\lambda B)^{-1}W^{n+1}fd\lambda,\;I_{2n}(t):=0,\,n=-2,
$$
\begin{equation*}
 I_{2n}(t):=  \left\{ \begin{aligned}
 -\sum\limits_{k=-2}^{n-1 }\beta_{k}(t)B^{k-n+1}  f,\;n> -2,\\
 \sum\limits_{k=-3}^{n}\beta_{k}(t)B^{k-n+1}  f  ,\;  n\leq-3\, \\
\end{aligned}
 \right. .
\end{equation*}
Using the proved above fact $\beta_{k}(t)=0,$ we have  $I_{2n}(t)=0.$ Since in consequence of Lemma \ref{L1}, inequality \eqref{20a} for arbitrary $j\in \mathbb{N},f\in \mathrm{D}(W^{s}),$ we have
$$
 e^{-\varphi(\lambda) t}\lambda^{-2}(I-\lambda B)^{-1}W^{n+1}f\rightarrow \lambda^{-2}(I-\lambda B)^{-1}W^{n+1}f,\,t\rightarrow +0 ,\,\lambda\in \vartheta_{j}(B),
$$
where convergence is uniform   with respect to the variable $\lambda,$    the  improper  integral $I_{1n}(t)$
 is uniformly convergent with respect to the variable $t,$ then
  we get
\begin{equation*}
I_{1n}(t)\rightarrow \frac{1}{2\pi i}\int\limits_{\vartheta(B)} \lambda^{-2}(I-\lambda B)^{-1}W^{n+1}fd\lambda,\,t\rightarrow+0,
\end{equation*}
Note that the last integral can be calculated as a residue, we have
\begin{equation}\label{22a}
 \frac{1}{2\pi i}\int\limits_{\vartheta(B)} \lambda^{-2}(I-\lambda B)^{-1}W^{n+1}fd\lambda=
  \lim\limits_{\lambda\rightarrow0}\frac{d (I-\lambda B)^{-1} }{d\lambda }W^{n+1}f=W^{n}f,
$$
$$
 n\in \mathbb{Z}^{-}\cup \{0,1,...,s\}.
\end{equation}
It is obvious that using this  formula, we obtain the following relation
\begin{equation*}
\lim\limits_{t\rightarrow+0}\frac{1}{2\pi i}\int\limits_{\vartheta(B)}\varphi_{1}(\lambda) e^{-\varphi(\lambda) t} B(I-\lambda B)^{-1}fd\lambda= \sum\limits_{n=0}^{s}c_{n}W^{n}f ,\;f\in \mathrm{D}(W^{s}) .
\end{equation*}
Consider a principal part of the Laurent series.
The following reasonings are analogous to the above,   we get
$$
\left\| \sum\limits_{n=1}^{\infty}   c_{-n} \!\!\int\limits_{\vartheta(B)}   e^{-\varphi(\lambda) t}\lambda^{-2} (I-\lambda B)^{-1}B^{n-1}fd\lambda\right\|\leq C \|f\|_{\mathfrak{H}}\sum\limits_{n=1}^{\infty}    |c_{-n}| \cdot\|B^{n-1}\|\int\limits_{\vartheta(B)} |\lambda|^{-2}  e^{-C|\lambda|^{s} t} |d \lambda|\leq
$$
$$
 \leq C \|f\|_{\mathfrak{H}}\sum\limits_{n=1}^{\infty}    |c_{-n}| \cdot\|B\|^{n-1}\int\limits_{\vartheta(B)}  |\lambda|^{-2} |d \lambda|<\infty.
$$
It gives us  the uniform convergence  of the series with respect to $t$  at the left-hand side of the last relation.
Using the analog of   the well-known theorem of calculus  on the absolutely convergent series, we have
$$
 \sum\limits_{n=1}^{\infty} c_{-n} \!\!  \int\limits_{\vartheta(B)}   e^{-\varphi(\lambda) t}\lambda^{-2} (I-\lambda B)^{-1}B^{n-1}fd\lambda\rightarrow
  \sum\limits_{n=1}^{\infty} c_{-n}\!\!   \int\limits_{\vartheta(B)}    \lambda^{-2} (I-\lambda B)^{-1}B^{n-1}fd\lambda,\;t\rightarrow+0.
$$
  {\bf Taking into account  \eqref{21a}, \eqref{22a},  we get
$$
\lim\limits_{t\rightarrow+0}\frac{1}{2\pi i}\int\limits_{\vartheta(B)}\varphi_{2}(\lambda) e^{-\lambda t} B(I-\lambda B)^{-1}fd\lambda= \sum\limits_{n=1}^{\infty}c_{-n}B^{n}f ,\;f\in \mathfrak{H}.
$$
It is clear that  the second relation \eqref{14a} holds. The proof is complete.}
\end{proof}

\noindent{\bf Existence and uniqueness theorems}\\

This paragraph is a climax of the paper, here we represent a theorem that put a beginning of a marvelous research based on the Abell-Lidskii method. The attempt to consider an operator function at the right-hand side was made in the paper  \cite{firstab_lit:2kukushkin2022}, where we consider a case that is not so difficult  since  the corresponding function is of the power type. In contrast, in this paper we consider a more complicated case, a function that compels us to involve a principally different method of study. The existence and uniqueness theorem given bellow is based on the one of the number of  theorems  presented in \cite{firstab_lit:1kukushkin2021}.

 Further, we will consider a Hilbert space $\mathfrak{H}$ consists of   element-functions $u:\mathbb{R}_{+}\rightarrow \mathfrak{H},\,u:=u(t),\,t\geq0$    and we will assume that if $u$ belongs to $\mathfrak{H}$    then the fact  holds for all values of the variable $t.$ Notice that under such an assumption all standard topological  properties as completeness, compactness e.t.c. remain correctly defined. We understand such operations as differentiation and integration in the generalized sense that is caused by the topology of the Hilbert space $\mathfrak{H}.$ The derivative is understood as the following  limit
$$
  \frac{u(t+\Delta t)-u(t)}{\Delta t}\stackrel{\mathfrak{H}}{ \longrightarrow}\frac{du}{dt} ,\,\Delta t\rightarrow 0.
$$
Let $t\in \Omega:=[a,b],\,0< a <b<\infty.$ The following integral is understood in the Riemann  sense as a limit of partial sums
\begin{equation*}
\sum\limits_{i=0}^{n}u(\xi_{i})\Delta t_{i}  \stackrel{\mathfrak{H}}{ \longrightarrow}  \int\limits_{\Omega}u(t)dt,\,\lambda\rightarrow 0,
\end{equation*}
where $(a=t_{0}<t_{1}<...<t_{n}=b)$ is an arbitrary splitting of the segment $\Omega,\;\lambda:=\max\limits_{i}(t_{i+1}-t_{i}),\;\xi_{i}$ is an arbitrary point belonging to $[t_{i},t_{i+1}].$
The sufficient condition of the last integral existence is a continuous property (see\cite[p.248]{firstab_lit:Krasnoselskii M.A.}) i.e.
$
u(t)\stackrel{\mathfrak{H}}{ \longrightarrow}u(t_{0}),\,t\rightarrow t_{0},\;\forall t_{0}\in \Omega.
$
The improper integral is understood as a limit
\begin{equation*}
 \int\limits_{a}^{b}u(t)dt\stackrel{\mathfrak{H}}{ \longrightarrow} \int\limits_{a}^{c}u(t)dt,\,b\rightarrow c,\,c\in [0,\infty].
\end{equation*}
Let us study   a Cauchy problem
\begin{equation}\label{23a}
     \frac{du}{d t} +\varphi(W) u=0  ,\;u(0)=h\in \mathrm{D}(W),
\end{equation}
in the case  when the operator   $\varphi(W)$ is   accretive  we assume  that   $h\in \mathfrak{H}.$

\begin{teo}\label{T1} Assume that $B$ is a compact operator, $\Theta(B) \subset   \mathfrak{L}_{0}(\theta),\,\theta< \pi/2   ,\;B\in\tilde{\mathfrak{S}}_{\rho},$   moreover  in the case $B  \in \tilde{\mathfrak{S}}_{\rho}\setminus  \mathfrak{S}_{\rho}$ the additional condition holds
\begin{equation}\label{24a}
   \frac{n_{B^{m+1}}(r^{m+1})}{r^{\rho} }\rightarrow 0,\, m=[\rho],
\end{equation}
the function $\varphi$ is satisfied the conditions of Lemma \ref{5a}, the following  condition holds $s>\rho.$
Then a sequence of natural numbers $\{N_{\nu}\}_{0}^{\infty}$ can be chosen so that
there exists a solution of the Cauchy problem \eqref{23a} in the form
\begin{equation}\label{25a}
u(t)= \frac{1}{2\pi i}\int\limits_{\vartheta(B)}e^{-\varphi(\lambda) t}B(I-\lambda B)^{-1}h\, d \lambda
=  \sum\limits_{\nu=0}^{\infty}  \sum\limits_{q=N_{\nu}+1}^{N_{\nu+1}}\sum\limits_{\xi=1}^{m(q)}\sum\limits_{i=0}^{k(q_{\xi})}e_{q_{\xi}+i}c_{q_{\xi}+i}(t),
\end{equation}
where
\begin{equation}\label{26}
  \sum\limits_{\nu=0}^{\infty}\left\|\sum\limits_{q=N_{\nu}+1}^{N_{\nu+1}}\sum\limits_{\xi=1}^{m(q)}
  \sum\limits_{i=0}^{k(q_{\xi})}e_{q_{\xi}+i}c_{q_{\xi}+i}(t)\right\|_{\mathfrak{H}}<\infty.
\end{equation}
 Moreover, the existing solution is unique if the operator   $\varphi(W)$ is accretive.
 \end{teo}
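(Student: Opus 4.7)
The plan is to construct the solution as the contour integral in \eqref{25a}, identify it with the root-vector series via residue computations, verify the initial condition and the differential equation using Lemmas \ref{L4b} and \ref{5a}, and finally deduce uniqueness from accretivity of $\varphi(W)$.

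First I would define $u(t)$ directly by $u(t):=\frac{1}{2\pi i}\int_{\vartheta(B)}e^{-\varphi(\lambda)t}B(I-\lambda B)^{-1}h\,d\lambda$ and argue that this improper integral converges absolutely in $\mathfrak H$ for every $t>0$: on the rectilinear rays of $\vartheta(B)$ Lemma \ref{L1} bounds $\|(I-\lambda B)^{-1}\|$ by a constant, while the Laurent-series estimate \eqref{20a} from the proof of Lemma \ref{5a} yields $|e^{-\varphi(\lambda)t}|\le e^{-C|\lambda|^{s}t}$, which decays fast enough. Next I would connect this integral with the root-vector series. Enclosing a finite portion of $\vartheta(B)$ by a large arc $\tilde\vartheta_R:=\{|\lambda|=R,\,|\arg\lambda|\le\theta+\varsigma\}$ and invoking the residue theorem, Lemma \ref{L3b} contributes exactly the inner block $\sum_{\xi,i}e_{q_\xi+i}c_{q_\xi+i}(t)$ at each pole $\lambda_q$. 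The heart of the argument is to choose the radii $R_\nu=\tilde R_\nu$ supplied by Lemma \ref{L2} so that $\|(I-\lambda B)^{-1}\|$ obeys $e^{\gamma(|\lambda|)|\lambda|^\varrho}|\lambda|^m$ on $|\lambda|=R_\nu$. Combining this with the exponential decay $e^{-C R_\nu^{s}t}$ (here enters $s>\rho\ge\varrho$ and the refinement \eqref{24a} ensuring $\gamma(r)r^\varrho=o(r^{s})$) gives $\int_{\tilde\vartheta_{R_\nu}}e^{-\varphi(\lambda)t}B(I-\lambda B)^{-1}h\,d\lambda\to0$ along the chosen subsequence, which converts the contour integral into the series in \eqref{25a} and simultaneously yields \eqref{26} by grouping residues between consecutive radii.

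Next I would verify the Cauchy data. For $h\in\mathrm D(W)$ one has $h=Bg$ for some $g\in\mathfrak H$, hence $h\in\mathrm R(B)$ and Lemma \ref{L4b} (applied with $\varphi$ as in Lemma \ref{5a}; the rapid decay estimate \eqref{11a} needed in that proof is furnished by condition \eqref{13a}) gives $u(t)\to h$ as $t\to+0$. For the differential equation I would differentiate under the integral sign, which is legitimate because after formal differentiation the integrand $-\varphi(\lambda)e^{-\varphi(\lambda)t}B(I-\lambda B)^{-1}h$ still admits the estimate \eqref{20a} and thus converges uniformly in $t$ on compact subsets of $(0,\infty)$. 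The resulting integral is exactly $-\varphi(W)u(t)$ by the first identity of \eqref{14a} in Lemma \ref{5a}, so $du/dt+\varphi(W)u=0$ pointwise for $t>0$. The continuity at $t=0$ together with $u(0)=h$ completes the existence part.

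For uniqueness under the assumption that $\varphi(W)$ is accretive, let $u_1,u_2$ be two solutions and set $v:=u_1-u_2$, so that $v'+\varphi(W)v=0$ and $v(0)=0$. Taking the inner product with $v$ and using $\mathrm{Re}(\varphi(W)v,v)_{\mathfrak H}\ge0$ yields
\begin{equation*}
\tfrac{1}{2}\tfrac{d}{dt}\|v(t)\|_{\mathfrak H}^{2}=\mathrm{Re}(v',v)_{\mathfrak H}=-\mathrm{Re}(\varphi(W)v,v)_{\mathfrak H}\le0,
\end{equation*}
so $\|v(t)\|_{\mathfrak H}^{2}$ is non-increasing and vanishes at $t=0$, forcing $v\equiv0$. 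The main obstacle I expect is the third step of the existence argument: choosing the radii $\{R_\nu\}$ via Lemma \ref{L2} so that the large-arc integral vanishes along this subsequence and, simultaneously, the blockwise norms between $R_\nu$ and $R_{\nu+1}$ are summable as in \eqref{26}; everything else is either a residue computation or a direct application of Lemmas \ref{L4b} and \ref{5a}.
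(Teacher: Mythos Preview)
Your plan is essentially the same as the paper's: define $u(t)$ by the contour integral, convert it to the root-vector series via Lemma~\ref{L3b} on annular contours with radii chosen by Lemma~\ref{L2}, verify the differential equation by differentiating under the integral and invoking Lemma~\ref{5a}, check the initial condition via Lemma~\ref{L4b}, and obtain uniqueness from accretivity. The paper organizes the estimate for \eqref{26} more explicitly than you do---it splits each annular contour $\vartheta_\nu$ into two arcs $\tilde\vartheta_\nu,\tilde\vartheta_{\nu+1}$ and two radial segments $\vartheta_{\nu_\pm}$, bounding the arc pieces $J_\nu$ via Lemma~\ref{L2} and the radial pieces $J_\nu^\pm$ via Lemma~\ref{L1}, and then sums each of the three series separately---but this is only a matter of detail that you have correctly anticipated as the main obstacle. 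One minor slip: in Lemma~\ref{L2} one has $\varrho\ge\rho$, not $\rho\ge\varrho$; the paper applies the lemma with $\varrho=\rho$ and then uses the auxiliary Lemma~2 of \cite{firstab_lit:1kukushkin2021} (together with \eqref{24a} when needed) to ensure $\gamma(r)r^\rho - Ctr^s<0$ for large $r$.

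There is, however, one genuine omission. The Cauchy problem \eqref{23a} as stated allows $h\in\mathfrak H$ (not merely $h\in\mathrm D(W)$) whenever $\varphi(W)$ is accretive, and the theorem claims existence in that generality. Your argument for the initial condition covers only $h\in\mathrm D(W)=\mathrm R(B)$. The paper closes this gap by first showing, for $h\in\mathrm D(W)$, that the solution operator $S_t:h\mapsto u(t)$ satisfies $\|S_t h\|_{\mathfrak H}\le\|h\|_{\mathfrak H}$ (this follows from the same energy computation you use for uniqueness), hence $\|S_t\|\le1$ by density of $\mathrm D(W)$, and then uses a standard $\varepsilon/3$ argument to push $u(t)\to h$ through to arbitrary $h\in\mathfrak H$. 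You should add this density step.
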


\begin{proof}
Firstly, let us establish relation \eqref{26}.  Consider a contour $\vartheta(B).$ Having fixed $R>0,0<\kappa<1,$ so that $R(1-\kappa)=r,$ consider a monotonically increasing sequence $\{R_{\nu}\}_{0}^{\infty},\,R_{\nu}=R(1-\kappa)^{-\nu+1}.$   Using Lemma \ref{L2}, we get
$$
\|(I-\lambda B )^{-1}\|_{\mathfrak{H}}\leq e^{\gamma (|\lambda|)|\lambda|^{\rho}}|\lambda|^{m},\,m=[\rho],\,|\lambda|=\tilde{R}_{\nu},\;R_{\nu}<\tilde{R}_{\nu}<R_{\nu+1},
$$
where the function  $\gamma (r)$ is defined in Lemma \ref{L2},
$$
\beta(r )= r^{ -\frac{\rho}{m+1} }\left(\int\limits_{0}^{r}\frac{n_{B^{m+1}}(t)}{t }dt+
r \int\limits_{r}^{\infty}\frac{n_{B^{m+1}}(t)}{t^{ 2  }}dt\right).
$$
Note that  in accordance with Lemma 3 \cite{firstab_lit:1Lidskii}  the following relation holds
\begin{equation}\label{27}
\sum\limits_{i=1}^{\infty}\lambda^{\frac{\rho+\varepsilon}{ (m+1)}}_{i}( \tilde{B} )\leq \sum\limits_{i=1}^{\infty}s^{ \,\rho+\varepsilon }_{i}( B )<\infty,\,\varepsilon>0,
\end{equation}
where   $\tilde{B}:=(B^{\ast m+1}A^{m+1})^{1/2}.$  It is clear that   $\tilde{B}\in \tilde{\mathfrak{S}}_{\upsilon},\,\upsilon\leq \rho/(m+1).$
Denote by $\vartheta_{\nu}$ a bound of the intersection of the ring $\tilde{R}_{\nu}<|\lambda|<\tilde{R}_{\nu+1}$ with the interior of the contour $\vartheta(B),$ denote by $N_{\nu}$ a number of poles being   contained  in the set $\mathrm{int }\,\vartheta(B) \,\cap \{\lambda:\,r<|\lambda|<\tilde{R}_{\nu} \}.$ In accordance with Lemma \ref{L3b},  we get
\begin{equation}\label{28}
 \frac{1}{2\pi i}\oint\limits_{\vartheta_{\nu}}e^{-\varphi(\lambda) t} B(I-\lambda B)^{-1}h d \lambda =\sum\limits_{q=N_{\nu}+1}^{N_{\nu+1}}\sum\limits_{\xi=1}^{m(q)}\sum\limits_{i=0}^{k(q_{\xi})}e_{q_{\xi}+i}c_{q_{\xi}+i}(t),\;h\in \mathfrak{H}.
\end{equation}
Let us estimate the above integral, for this purpose split the contour $\vartheta_{\nu}$ on  terms $\tilde{\vartheta}_{ \nu  }:=\{\lambda:\,|\lambda|=\tilde{R}_{\nu},\,|\mathrm{arg} \lambda |\leq\theta +\varsigma\},\,\tilde{\vartheta}_{ \nu+1  },\, \vartheta_{\nu_{+}}:=
\{\lambda:\,\tilde{R}_{\nu}<|\lambda|<\tilde{R}_{\nu+1},\, \mathrm{arg} \lambda  =\theta +\varsigma\},\,\vartheta_{\nu_{-}}:=
\{\lambda:\,\tilde{R}_{\nu}<|\lambda|<\tilde{R}_{\nu+1},\, \mathrm{arg} \lambda  =-\theta -\varsigma\}.$   Applying  Lemma \ref{L2}, relation \eqref{19z},  we get
\begin{equation*}
 J_{  \nu  }: =\left\|\,\int\limits_{\tilde{\vartheta}_{ \nu }}e^{-\varphi(\lambda) t} B(I-\lambda B)^{-1}h d \lambda\,\right\|_{\mathfrak{H}}\leq \,
 \int\limits_{\tilde{\vartheta}_{ \nu }}e^{-t\mathrm{Re}\,\varphi(\lambda)  } \left\|B(I-\lambda B)^{-1}h \right\|_{\mathfrak{H}} |d \lambda|\leq
\end{equation*}
$$
 \leq e^{\gamma (|\lambda|)|\lambda|^{\rho} }|\lambda|^{m+1}   Ce^{-  C|\lambda|^{s}t } \int\limits_{-\theta-\varsigma}^{\theta+\varsigma}  d \,\mathrm{arg} \lambda,\,|\lambda|=\tilde{R}_{\nu}.
$$
Thus,  we get
$
J_{ \nu } \leq     Ce^{\gamma (|\lambda|)|\lambda|^{\rho}-C|\lambda|^{s}t   }|\lambda|^{m+1},
   $
   where
   $
   \,m=[\rho],\,|\lambda|=\tilde{R}_{\nu}.
$
Let us show that for a fixed $t$ and  a sufficiently large $|\lambda|,$ we have  $ \gamma (|\lambda|)|\lambda|^{\rho}-C|\lambda|^{s}t     < 0.$
It follows  directly from Lemma 2 \cite{firstab_lit:1kukushkin2021}, we should consider  \eqref{27}, in the case when  $B\in \mathfrak{S}_{\rho}$ as well as in the case $B  \in \tilde{\mathfrak{S}}_{\rho}\setminus  \mathfrak{S}_{\rho}$  but here we must involve the additional condition \eqref{24a}.
 Therefore
$$
 \sum\limits_{\nu=0}^{\infty}J_{\nu}<\infty.
$$
Using the analogous estimates, applying  Lemma \ref{L1}, we get
$$
 J^{+}_{\nu}: =\left\|\,\int\limits_{\vartheta_{\nu_{+}}}e^{-\varphi(\lambda) t} B(I-\lambda B)^{-1}h d \lambda\,\right\|_{\mathfrak{H}}\leq  C\|h\|_{\mathfrak{H}} \cdot C\int\limits_{R_{\nu}}^{R_{\nu+1}}  e^{-  C t \mathrm{Re}\, \varphi(\lambda) }   |d   \lambda|\leq
  $$
  $$
  \leq C  e^{-t CR^{m}_{\nu}  }     \int\limits_{R_{\nu}}^{R_{\nu+1}}   |d   \lambda|=
 C  e^{-t CR^{m}_{\nu}  }  \{R_{\nu+1}-R_{\nu} \}.
 $$
$$
 J^{-}_{\nu}: =\left\|\,\int\limits_{\vartheta_{\nu_{-}}}e^{-\varphi(\lambda) t}B(I-\lambda B)^{-1}h d \lambda\,\right\|_{\mathfrak{H}}\leq   C  e^{-t CR^{m}_{\nu}  }     \int\limits_{R_{\nu}}^{R_{\nu+1}}   |d   \lambda|=
 C  e^{-t CR^{m}_{\nu}  }  \{R_{\nu+1}-R_{\nu} \}.
$$
The obtained results allow us to claim (the proof is left to the reader) that
$$
  \sum\limits_{\nu=0}^{\infty}J^{+}_{\nu}<\infty,\;\; \sum\limits_{\nu=0}^{\infty}J^{-}_{\nu}<\infty.
$$
Using the formula \eqref{28}, the given above decomposition of the contour $\vartheta_{\nu},$  we   obtain the relation \eqref{26}. Let us establish   \eqref{25a}, for this purpose, we should note that in accordance with  relation \eqref{28}, the properties of the contour integral, we have
 $$
 \frac{1}{2\pi i}\!\!\oint\limits_{\vartheta_{\tilde{R}_{p}}  (B)}\!\!\!\!e^{-\varphi(\lambda) t} B(I-\lambda B)^{-1}h \,d \lambda =
   \sum\limits_{\nu=0}^{p-1} \sum\limits_{q=N_{\nu}+1}^{N_{\nu+1}}\sum\limits_{\xi=1}^{m(q)}
  \sum\limits_{i=0}^{k(q_{\xi})}e_{q_{\xi}+i}c_{q_{\xi}+i}(t)
  ,\;h\in \mathfrak{H},\,p\in \mathbb{N},
$$
where the contour   $\vartheta_{\tilde{R}_{p}}  (B)$ is defined in Lemma \ref{5a}.    Using the proved above  fact $J_{ \nu }\rightarrow0,\,\nu\rightarrow\infty,$    we can easily  get
$$
\frac{1}{2\pi i}\!\!\oint\limits_{\vartheta_{\tilde{R}_{p}}  (B)}\!\!\!\!e^{-\varphi(\lambda) t} B(I-\lambda B)^{-1}h \,d \lambda\rightarrow \frac{1}{2\pi i}\oint\limits_{\vartheta  (B)} e^{-\varphi(\lambda) t} B(I-\lambda B)^{-1}h \,d \lambda,\;p\rightarrow\infty.
$$
 The latter relation
  gives us the desired result \eqref{25a}.
Let us show that $u(t)$ is a solution of the problem \eqref{23a}. Applying Lemma \ref{5a}, we get
$$
  \varphi(W)u(t)=\frac{1}{2\pi i}\int\limits_{\vartheta(B)}\varphi(\lambda) e^{-\varphi(\lambda)  t} B(I-\lambda B)^{-1}hd\lambda.
$$
Now,  we need establish  the following relation
\begin{equation}\label{29}
\frac{d u}{d t}=-\frac{1}{2\pi i} \int\limits_{\vartheta(B)}\varphi(\lambda) e^{-\varphi(\lambda)  t} B(I-\lambda B)^{-1}h \,d\lambda,\,h\in \mathfrak{H},
\end{equation}
i.e. we can use a differentiation operation  under the integral.
For  this purpose, let us   prove that for an arbitrary $\vartheta_{j}(B)$ (the definition is given in Lemma \ref{5a}) there exists a limit
\begin{equation}\label{30}
  \frac{e^{-\varphi(\lambda)  \Delta t} -1}{ \Delta t}\, e^{-\varphi(\lambda)  t}B(I-\lambda B)^{-1}h\stackrel{\mathfrak{H}}{ \longrightarrow}-\varphi(\lambda) e^{-\varphi(\lambda)  t}B(I-\lambda B)^{-1}h ,\,\Delta t\rightarrow 0,
\end{equation}
where convergence is uniform with respect to $ \lambda\in \vartheta_{j}(B).$ Applying Lemma \ref{L1},   we get
$$
\left\| \frac{e^{-\varphi(\lambda)  \Delta t} -1}{ \Delta t} e^{-\varphi(\lambda)  t}B(I-\lambda B)^{-1}h +\varphi(\lambda) e^{-\varphi(\lambda)  t}B(I-\lambda B)^{-1}h\right\|_{\mathfrak{H}}\leq
$$
$$
\leq C  \left|\frac{e^{-\varphi(\lambda)  \Delta t} -1}{ \Delta t} +\varphi(\lambda) \right|\max\limits_{\lambda\in \vartheta_{j}(B)}\!\!e^{-\mathrm{Re}\,\varphi(\lambda)  t}.
$$
It is clear that
$$
\frac{e^{-\varphi(\lambda)  \Delta t} -1}{ \Delta t} \rightarrow-\varphi(\lambda),\,\Delta t\rightarrow 0,
$$
where convergence, in accordance with the Heine-Cantor theorem, is uniform with respect to $\lambda\in \vartheta_{j}(B).$    Thus, we obtain \eqref{30}.   Using decomposition on the Taylor series, applying \eqref{20a}, we get
$$
\left\|\frac{e^{-\varphi(\lambda)  \Delta t} -1}{ \Delta t}e^{-\varphi(\lambda)  t}\right\|_{\mathfrak{H}}\leq |\varphi(\lambda)| e^{ |\varphi(\lambda)  \Delta t|}
e^{-\mathrm{Re}\,\varphi(\lambda)  t}\leq |\varphi(\lambda)| e^{(\Delta t-Ct)  |\varphi(\lambda)|  }\leq
 $$
 $$
 \leq|\varphi(\lambda)| e^{(\Delta t-Ct) C | \lambda |^{s}  },\,\lambda\in \vartheta(B).
$$
Thus applying the latter estimate,  Lemma \ref{L1}, for a sufficiently small value $\Delta t,$ we get
$$
 \left\|\, \int\limits_{\vartheta(B)}\frac{e^{-\varphi(\lambda)  \Delta t} -1}{ \Delta t} e^{-\varphi(\lambda)  t}B(I-\lambda B)^{-1}h d\lambda \right\|_{\mathfrak{H}}
  \leq
   C \|h\|_{\mathfrak{H}} \int\limits_{\vartheta(B)} e^{-C   |\lambda|^{s}}|\lambda|^{s}   |d\lambda|.
$$
The function under the integral at the right-hand side of the last relation guaranties that the improper integral at the left-hand side
 is uniformly convergent with respect to $\Delta t.$ These facts give us an opportunity to claim that the relation \eqref{29} holds. Here, we should explain that this conclusion is based upon the generalization of the well-known theorem of the calculus. In its own turn it follows easily from the theorem on the connection with the simultaneous limit and the repeated limit. We left a complete  investigation of the matter to the reader having noted that the scheme of the reasonings is absolutely the same in comparison with the ordinary calculus.
    Thus, we obtain the fact that $u$ is a solution of the  equation \eqref{23a}.

 Let us show that the initial condition holds in the sense
$
u(t)   \xrightarrow[   ]{\mathfrak{H}}  h,\,t\rightarrow+0.
$
It becomes clear in the case  $h\in \mathrm{D}( W ),$ for in this case  it suffices to apply Lemma \ref{L4b}, what  gives  us the desired result, i.e. we can    put $u(0)=h.$  Consider a case when $h$ is an arbitrary element of the Hilbert space $\mathfrak{H}$  and let us involve the accretive property of the operator $ \varphi(W).$ In accordance with the above, for a fixed value of $t,$ we can understand a correspondence between $u(t)$ and $h$ as an operator  $S_{t}:\mathfrak{H}\rightarrow \mathfrak{H}.$
 Let us prove  that
$
\|S_{t}\|_{\mathfrak{H}\rightarrow \mathfrak{H}}\leq1,\;t>0.
$
Firstly, assume   that $h\in \mathrm{D}( W ).$
Let us multiply the both sides of the  relation \eqref{23a} on $u$  in the sense of the inner product, we get
$
\left(u'_{t},u\right)_{\mathfrak{H}}+\left( \varphi(W) u,u\right)_{\mathfrak{H}}=0.
$
Consider a real part of the last relation, we have
$
\mathrm{Re}\left(u'_{t},u\right)_{\mathfrak{H}}+\mathrm{Re}(\varphi(W)u,u)_{\mathfrak{H}}= \left(u'_{t},u\right)_{\mathfrak{H}}/2+ \left(u, u'_{t}\right)_{\mathfrak{H}}/2+\mathrm{Re}(\varphi(W)u,u)_{\mathfrak{H}}.
$
Therefore
$
   \left(\|u(t)\|_{\mathfrak{H}}^{2}\right)'_{t}  =-2\mathrm{Re}(\varphi(W)u,u)_{\mathfrak{H}}\leq 0.
$
Integrating both sides from zero to $\tau>0,$ we get
$
  \|u(\tau)\|_{\mathfrak{H}}^{2}-  \|u(0)\|_{\mathfrak{H}}^{2} \leq 0.
$
The last relation can be rewritten in the form
$
\|S_{t}h\|_{\mathfrak{H}}\leq \|h\|_{\mathfrak{H}},\,h\in \mathrm{D}( W ).
$
Since $\mathrm{D}( W )$ is a dense set in $ \mathfrak{H},$   we obviously  obtain  the  desired result, i.e. $\|S_{t}\|_{\mathfrak{H}\rightarrow \mathfrak{H}}\leq 1.$
 Now, having assumed that
$
h_{n}   \xrightarrow[   ]{\mathfrak{H}}  h,\,n\rightarrow \infty,\;\{h_{n}\}\subset \mathrm{D}( W ),\,h\in \mathfrak{H},
$
consider the following reasonings
$
\|u(t)-h\|_{\mathfrak{H}}=\|S_{t}h-h\|_{\mathfrak{H}}=\|S_{t}h-S_{t}h_{n}+S_{t}h_{n}-h_{n}+h_{n}-    h\|_{\mathfrak{H}}\leq \|S_{t}\|\cdot\|h- h_{n}\|_{\mathfrak{H}}+\|S_{t}h_{n}-h_{n}\|_{\mathfrak{H}}+\|h_{n}-    h\|_{\mathfrak{H}}.
$
Note that
$
S_{t}h_{n}   \xrightarrow[   ]{\mathfrak{H}}  h_{n},\,t\rightarrow+0.
$
It is clear that  if we choose $n$ so that  $\|h- h_{n}\|_{\mathfrak{H}}<\varepsilon/3$ and  after that  choose $t$   so that $\|S_{t}h_{n}-h_{n}\|_{\mathfrak{H}}<\varepsilon/3,$ then we obtain
 $\forall\varepsilon>0,\,\exists \delta(\varepsilon):\,\|u(t)-h\|_{\mathfrak{H}}<\varepsilon,\,t<\delta.$
   Thus, we can put  $u(0)=h$ and claim that   the initial condition holds in the case $h\in \mathfrak{H}.$       The uniqueness follows easily from the fact that $ \varphi(W) $ is accretive.  In this case, repeating the previous reasonings, we come to
$
  \|\phi(\tau)\|_{\mathfrak{H}}^{2} \leq  \|\phi(0)\|_{\mathfrak{H}}^{2},
$
where $\phi $ is a sum of    solutions $u_{1} $ and $u_{2}.$ Notice that  by virtue of the initial conditions, we have  $\phi(0)=0.$  Therefore,   the previous    relation   can hold only if $\phi =0.$  The proof is complete.
\end{proof}
\begin{remark}Note that generally the existence and uniqueness  Theorem \ref{T1} is based upon  the Theorem 2 \cite{firstab_lit:1kukushkin2021}. The corresponding analogs based upon the Theorems 3,4 \cite{firstab_lit:1kukushkin2021} can be obtained due to the same scheme and the proofs are not worth representing. At the same time the mentioned analogs  can be useful because of   special conditions imposed upon the operator $B$ such as ones formulated in terms of the operator order \cite{firstab_lit:1kukushkin2021}. Here we should also appeal to an artificially constructed normal operator presented in \cite{firstab_lit:2kukushkin2022}.
\end{remark}

\noindent{\bf Concrete operators}\\

It is remarkable that  the made approach allows us to obtain a solution  analytically  for the right-hand  side  -- a function of an operator belonging to a sufficiently wide class of operators. A plenty of examples  are presented in the paper  \cite{firstab_lit:2kukushkin2022} where such well-known operators as  the Riemann-Liouville fractional differential operator, the Kipriyanov operator, the Riesz potential, the difference operator are considered.
 An interesting example can be also  found in the paper  \cite{firstab_lit(arXiv non-self)kukushkin2018}.
 More general approach, implemented in the paper \cite{kukushkin2021a} allows us to built a transform of an operator belonging to the class of  m-accretive operators.   We should stress a significance of the last claim  for the  class    contains the   infinitesimal generator of a $C_{0}$   semigroup of contractions. In its own turn  fractional differential operators of the real order can be expressed in terms of the  infinitesimal generator of the corresponding semigroup,  what makes the offered generalization relevant (more detailed see \cite{kukushkin2021a}). Bellow, we present a rather abstract example for which the paper results can be applied.
 Consider a transform of an m-accretive operator $J$ acting in $\mathfrak{H}$
$$
 Z^{\alpha}_{G,F}(J):= J^{\ast}GJ+FJ^{\alpha},\,\alpha\in [0,1),
$$
where symbols  $G,F$  denote  operators acting in $\mathfrak{H}.$
 The   Theorem 5 \cite{kukushkin2021a} gives us a tool to describe spectral properties  of the  transform $Z^{\alpha}_{G,F}(J).$ Particularly, we can establish the order  of the transform and its belonging to the  Schatten-von Neumann class of the convergence  exponent by virtue of the Theorem 3 \cite{kukushkin2021a}. Thus, having known the index of the Schatten-von Neumann class of the convergence  exponent, we can apply Theorem \ref{T1} to the transform.

\section{Conclusions}

In this paper,  we invented a technique to study evolution equations with the right-hand side   a function of the non-selfadjoint unbounded operator.   It is remarkable that,  we may say that the main issue of the paper is an application of  the spectral theorem to the special class of non-selfadjoint  operators and in the natural way,  we come to the definition of a function of the unbounded non-selfadjoint operator. Under this point of view,  the main  highlights of this paper  are  propositions analogous to the spectral theorem.  We can perceive them  as an introduction or a  way of reformulating the main principles of the spectral theorem based upon  the peculiarities of the convergence in the Abell-Lidsky sense. In this regard,   the main obstacle that appears   is how to define  an analogue of a  spectral family or decomposition of the identical operator.

However, as  a main result we have obtained an approach  allowing us to principally broaden conditions imposed upon the right-hand side of the evolution equation in the abstract Hilbert space. The application part of the paper appeals to the theory of differential equations.  In particular,     the existence and uniqueness theorems  for evolution  equations,   with the right-hand side being presented by  an operator function of  a  differential operator  with a fractional derivative in  final terms, are covered by the invented abstract method. In connection with this,  such operators  as a Riemann-Liouville  fractional differential operator,    Kipriyanov operator, Riesz potential,  difference operator can be considered. Moreover, we can consider a class the artificially constructed   normal operators   for which the clarification of the Lidskii  results  relevantly works.
Apparently, the further step in the theoretical study  may  be  to consider an entire function that generates the operator function, note that a prerequisite of the prospective result  is given by Lemma \ref{L4b}. In this case, some difficulties that may appear relate to     the propositions analogous to Lemma \ref{5a} and Theorem \ref{T1}.   Having been inspired by the above ideas,  we  hope  that  the concept  will have a further development.

\end{document}